\documentclass[11pt]{amsart}
\usepackage{amsmath,amssymb,mathtools}
\usepackage[T1]{fontenc}
\usepackage{lmodern,textcomp}
\usepackage{color}
\usepackage{graphicx}
\usepackage{amsthm}
\usepackage[all]{xy}
\usepackage{caption}
\usepackage{enumitem}
\usepackage{hyperref}


\usepackage{geometry}
\geometry{hmargin=2.5cm,vmargin=2.0cm}
\usepackage{setspace}

\linespread{1.2}

\def\M{{\mathcal{M}}}
\def\V{{\mathcal{V}}}
\def\T{{\mathcal{T}}}
\def\A{{\mathcal{A}}}
\def\cS{\mathcal{S}}
\def\S{\mathcal{S}}
\def\C{{\mathcal{C}}}

\def\cC{{\mathcal{C}}}
\def\oC{\overline{\mathcal{C}}}
\def\oM{\overline{\mathcal{M}}}

\def\Om{{\Omega}}
\def\oOm{\overline{\Omega}}

\def\cO{\mathcal{O}}

\def\A{{\mathcal{A}}}
\def\cS{\mathcal{S}}

\def\RR{\mathbb{R}}
\def\CC{\mathbb{C}}
\def\ZZ{\mathbb{Z}}
\def\NN{\mathbb{N}}
\def\QQ{\mathbb{Q}}
\def\PP{\mathbb{P}}

\def\UU{\mathbb{U}}


\def\res{{\rm res}}

\def\ab{{\rm ab}}
\def\nab{{\rm nab}}

\def\Stab{{\rm Stab}}
\def\Tw{{\rm Tw}}
\def\Star{{\rm Star}}
\def\tStar{{\rm tStar}}
\def\dStar{{\rm dStar}}
\def\Bic{{\rm Bic}}
\def\Dec{{\rm Dec}}

\def\sskip{\vspace{4pt}}

\def\oGamma{{\overline{\Gamma}}}


\theoremstyle{definition}
\newtheorem{definition}{Definition}[section]

\theoremstyle{plain}
\newtheorem{conjecture}[definition]{Conjecture}
\newtheorem{theorem}[definition]{Theorem}
\newtheorem{problem}[definition]{Problem}

\newtheorem{proposition}[definition]{Proposition}

\newtheorem{lemma}[definition]{Lemma}

\begin{document}

\setlist[itemize,1]{leftmargin=18pt}
\setlist[enumerate,1]{leftmargin=18pt}

\title[Volumes of moduli spaces of flat surfaces]{Volumes of moduli spaces of flat surfaces}
\date{\today}
\author{Adrien Sauvaget}
\address{CNRS, Universit\'e de Cergy-Pontoise, Laboratoire de Math\'ematiques AGM, UMR 8088, 2 av. Adolphe Chauvin 95302 Cergy-Pontoise Cedex, France}
\email{adrien.sauvaget@math.cnrs.fr}

\keywords{Moduli spaces of differentials, intersection theory,  flat surfaces}
\subjclass[2010]{14H10,14H51,14C17,30F45}

\maketitle

\begin{abstract}  We study moduli spaces of flat surfaces with conical singularities of prescribed angles.  In the 90s Veech showed that these spaces are real-analytically diffeomorphic to moduli spaces of curves and endowed with a natural volume form.  We prove that the total volume of the space is finite.  When all angles are rational but not integral,  we show that Veech volumes can be explicitly computed and depend continuously on the angles.  This result is achieved in two steps:  we prove that Masur-Veech volumes of moduli spaces of $k$-differentials with no integral singularity are explicitly computable. Then, we study the asymptotic behavior of these volumes for large values of $k$.  

Finally, we study the extension of the volume function computed at strictly rational angles to angles data with an integral value.  We prove that a topological recursion relation holds at these special values of angles.  The proof of this result is based on an open conjectural expression of Masur-Veech volumes of moduli spaces of $k$-differentials with integral singularities.  Our result is a validation of this conjecture in the large $k$-limit. 
\end{abstract}

\setcounter{tocdepth}{1}
\setcounter{section}{-1}
\tableofcontents

\section{Introduction}\label{sec:intro}

 A {\em flat surface with conical singularities} (or {\em flat surface} for short in the text)  is a compact connected surface $C$ with a flat metric defined outside $n$ points $x_1,\ldots,x_n$ and such that: for all $i\in \{1,\ldots, n\}$ the neighborhood of $x_i$ is isomorphic to a cone with angle $2\pi a_i$ for some $a_i\in \RR_{>0}$. 
The genus of the surface is determined by the Gauss-Bonnet formula:
\begin{equation*}
2g(C)-2+n \, =\,  |a| \left(\overset{\rm def}{=}\sum_{i=1}^n a_i\right).
\end{equation*}
The vector $a=(a_1,\ldots,a_n)$ is called the {\em type} of the surface.  Two flat surfaces are isomorphic if there exists an isometry up to a constant scalar that preserves the ordering of the singularities.  In particular, two isomorphic surfaces share the same type.
 
 \sskip
 
Let $g$ and $n$ be non-negative integers  satisfying $2g-2+n>0$.  We denote by $\Delta_{g,n}\subset \RR^n$ the set of vectors $a$ such that $|a|=(2g-2+n)$, and by $\Delta_{g,n}^+=\Delta_{g,n}\cap\RR_{>0}^n$.  If we fix a vector $a\in \Delta_{g,n}^+$ then we denote by $\M(a)$ the {\em moduli space of flat surfaces of type $a$}.  There is a canonical isomorphism between $\M(a)$ and the moduli space of smooth curves of genus $g$ with $n$ markings
\begin{equation*}
\phi_a\colon \M(a)\overset{\sim}{\to} \M_{g,n}
\end{equation*}(this was proved by Thurston in genus 0~\cite{Thu},  and Troyanov in general~\cite{Tro}).

\subsection{Veech's measure}
 
 In~\cite{Vee1}, Veech produced systems of coordinates on $\M(a)$ in which the diffeomorphism $\phi_a$ is real-analytic.  He used these coordinates to describe the {\em holonomy foliation} whose leaves are locally defined as the level sets of the holonomy character map. This map assigns to a flat surface the {\em holonomy character}
  $$\pi_1(C\setminus \{x_1,\ldots,x_n\},\star )\to\mathbb{U}$$
computed by parallel transport (here $\mathbb{U}$ is the group of complex numbers of module 1).     If we fix a reference flat surface and a presentation of its fundamental group,  this assignment determines a map from a chart of $\M(a)$ to a  real torus $\mathbb{U}^{2g}$.  
 The holonomy foliation is smooth on a dense subset of $\M(a)$, and the leaves are endowed with a complex structure compatible with the complex structure of $\M_{g,n}$ via the morphism $\phi_a$.

 \sskip

Furthermore,   Veech constructed a  measure on $\M(a)$ by showing that the leaves of the holonomy foliation are canonically endowed with a volume form associated with a projective structure.   We consider the exterior product of this volume form with  the pull-back of the Haar measure of the torus along the holonomy character map. It defines a volume form  on an open dense subset of $\M(a)$ and thus a measure $\nu_{a}$ on $\M_{g,n}$ (see Section~\ref{ssec:forms} for the details and conventions).  We define the {\em flat volume function} as
\begin{eqnarray*}
{\rm Vol}\colon \Delta_{g,n}^+ &\to& \RR_{\geq 0}\cup \infty\\
a & \mapsto & \nu_{a}(\M_{g,n}).
\end{eqnarray*}

\begin{problem}\label{mproblem} Is the function ${\rm Vol}$ finite? Can we compute it? 
\end{problem}
McMullen solved this problem in genus 0 for $a\in ]0,1[^n$ using a generalization of the Gauss-Bonnet formula~\cite{McM}.  Koziarz-Nguyen proposed an alternative proof via the theory of Kähler-Eistein metrics~\cite{KozNgu}, and  Ghazouani-Pirio gave a heuristic solution for the case $(g,n)=(1,2)$ by computing the genus of the algebraic leaves of the holonomy foliation~\cite{GhaPir}.   Here, we solve Problem~\ref{mproblem} in all genera when $a$ is strictly rational. 
\begin{theorem}\label{th:veech}
If $n\geq 2$, then there exists a rational piece-wise polynomial $\mathcal{V}\colon\Delta_{g,n}^+ \to \RR$ of degree $4g-3+n$ such that the equality
\begin{eqnarray}
\label{eq:volexpresion} {\rm Vol}(a) &=& q(a)^{-1}\,  \V(a), \\
\label{eq:qexpresion} \text{ where }\,\,\, q(a)&=& \frac{(-1)^{g-1+n}}{4\, (2\pi)^{2g-2+n}} {{(2g-2+n)! \prod_{i=1}^n 2\, {\rm sin}(a_i\pi)}}
\end{eqnarray}
holds for all rational vectors $a$ without integral coordinates.
\end{theorem}
We will show that the piece-wise polynomial $\V$ is of class $C^1$ if $n\geq 3$ and $C^2$ if $n=2$, and vanishes at vectors with an integral coordinate (see Figure~\ref{fig:graph}).  These properties imply the following theorem. 
\begin{theorem}\label{th:veech1}
The function ${\rm Vol}$ computed at strictly rational vectors extends to a finite continuous function on $\Delta_{g,n}^+$.  Therefore ${\rm Vol}(a)$ is finite for all  vectors $a$ in $\Delta_{g,n}^+$, and equality~\eqref{eq:volexpresion} is valid for almost all vectors $a$.   
\end{theorem}

We conjecture that the function ${\rm Vol}$ is continuous, as  observed in genus 0.  If this is true,  then equality~\eqref{eq:volexpresion} holds for all values of $a$.  To prove the continuity of Veech's volumes, one could construct a ``good'' compactification of $\M(a)$ and study the growth of Veech's volume form along the boundary.\footnote{The articles~\cite{BCGGM3,CosMoeZac} achieved the compactification of strata of differentials and the study of Veech's  form growth along the boundary.  However, extending these works to general families of flat surfaces is a delicate task as new topological phenomena emerge due to the lack of local residue conditions. Namely, a degenerate flat surface could contain a vertical node of twist 0, which is a singularity of infinite area leading to a contracted component. }

\begin{figure}[h]
\begin{center}
\includegraphics[scale=0.24]{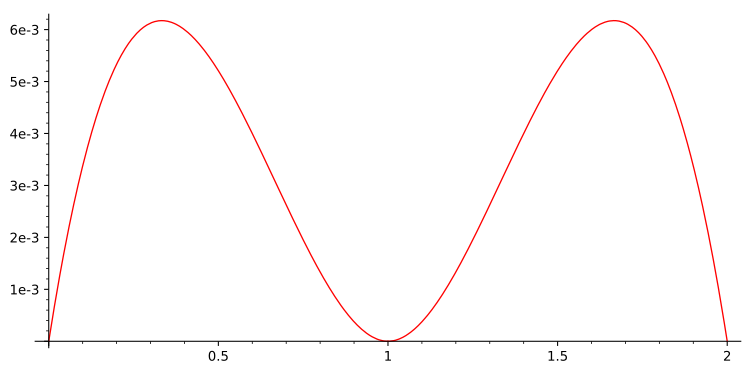} \hspace{10pt} \includegraphics[scale=0.24]{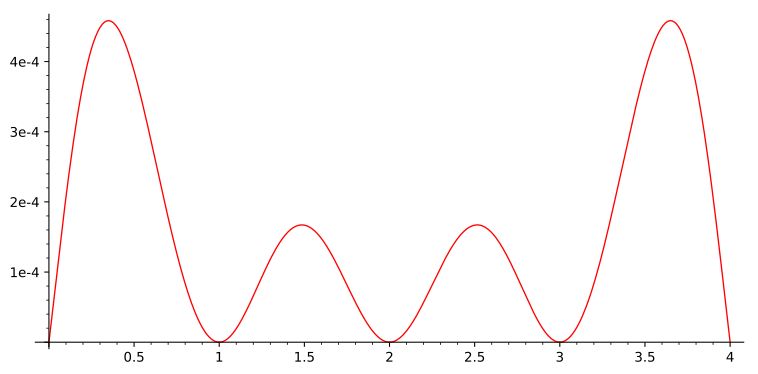}

\includegraphics[scale=0.24]{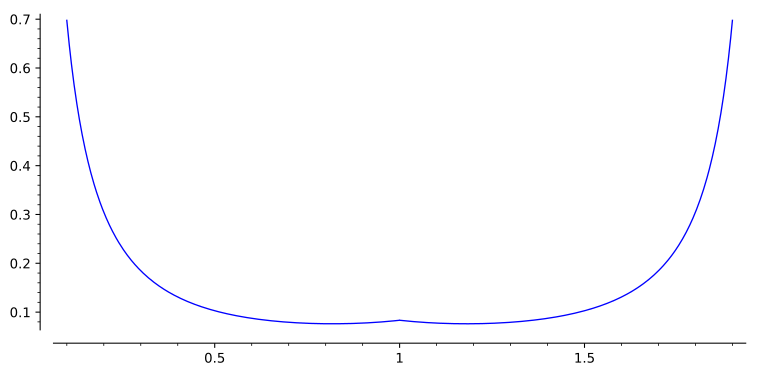} \hspace{10pt}  \includegraphics[scale=0.24]{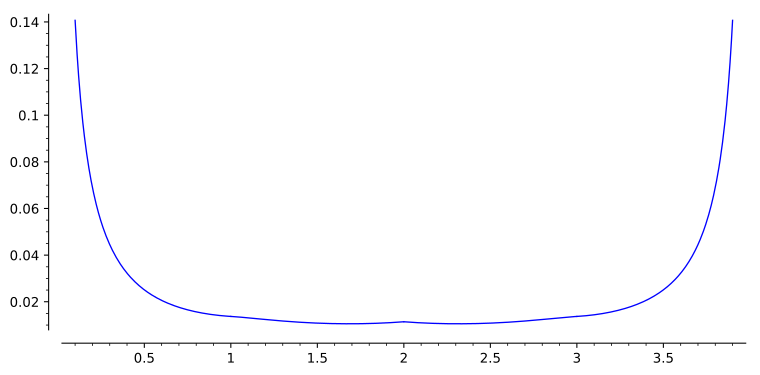}

\vspace{-5pt}

\end{center}
\captionsetup{width=0.90\linewidth}
\caption{\label{fig:graph} Graphs of   $a_1\mapsto (-1)^g\V(a_1,2g-a_1)$ (top), and  $a_1\mapsto {\rm Vol}(a_1,2g-a_1)$ (bottom), in genus  $g=1$ (left), and $g=2$ (right). }
\end{figure}

\subsection{Moduli spaces of pluri-canonical divisors} 

Let $a$ be a rational vector of ${\Delta}_{g,n}$.  We assume in this section that $a$ has at least one non-integral coordinate.    Let $k$ be  a positive integer such that $ka$ is integral.  A {\em $k$-canonical divisor of type $a$} is a smooth marked complex curve $(C,x_1,\ldots, x_n)$ such that the relation
\begin{equation}\label{eq:picrel}
\omega_{\rm log}^{\otimes k}\ = \ \mathcal{O}_C\left(ka_1\, x_1+\ldots+ka_n\, x_n\right)
\end{equation}
holds in the Picard group of $C$ (here $\omega_{\rm log}$ is $\omega_C(x_1+\ldots+x_n)$,  the cotangent line twisted by the set of markings).  We denote by $\M(a,k)$ the {\em moduli space of $k$-canonical divisors of type $a$}.  It is an algebraic sub-space of $\M_{g,n}$ of complex co-dimension $g$. 
 
\sskip

If $a$ is positive, then a $k$-canonical divisor carries a canonical flat metric of type $a$  with holonomy valued in $\mathbb{U}_k\subset \mathbb{U}$, the group of $k$th-roots of unity.  Conversely,  any flat surface of $\M(a)$ with holonomy valued in $\mathbb{U}_k$ is constructed from a $k$-canonical divisor, so we identify $\M(a,k)$ with a finite union of leaves of the holonomy foliation,  and endow $\M(a,k)$  with a canonical volume form.   The {\em Masur-Veech volume}  ${\rm Vol}(a,k)$ is the total volume of $\M(a,k)$ for this form, and it is finite (see~\cite{Vee}, \cite{Mas} for $k=2$, and \cite{Ngu} in general).  We prove the following Theorem along with Theorem~\ref{th:veech}.

\begin{theorem}\label{th:mv} If $a$ has no integral coordinate,  then ${\rm Vol}(a,k)$ can be explicitly computed.
\end{theorem}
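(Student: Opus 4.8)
The approach I would take is to express ${\rm Vol}(\alpha,k)$ through the same tautological recursion that computes $\widehat{{\rm Vol}}$. The starting point is the description recalled above: for $\alpha$ with no integral entry, $\M(\alpha,k)$ is a closed substack of $\M_{g,n}=\M(\alpha)$ of \emph{pure} dimension $2g-3+n$, cut out as the locus of flat surfaces whose rotational holonomy character takes values in $\mathbb{U}_k$. I would first disintegrate the Veech volume form $\nu_\alpha$ along the real-analytic holonomy map $h_\alpha\colon\M(\alpha)\to\mathbb{U}^{2g}$ recording the rotation parts of the holonomy on a symplectic basis of $H_1$ of the punctured surface. The fibre of $h_\alpha$ over a torsion character of order dividing $k$ is a finite union of open pieces of $\M(\alpha,k)$, permuted by the mapping class group, and on such a fibre the conditional measure of $\nu_\alpha$ coincides, up to an explicit constant depending only on $k$ and the dimension, with the Masur--Veech form. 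Summing over the finitely many torsion characters compatible with $\alpha$ then reduces ${\rm Vol}(\alpha,k)$ to fibre integrals of this disintegration.

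The second step is to evaluate such a fibre integral by induction on $-\chi=2g-2+n$. Since finite holonomy is equivalent to being a $k$-canonical divisor, the fibre compactifies to the space $\oM(\alpha,k)$ of twisted $k$-canonical divisors; its boundary is stratified by the decorated graphs of Section~\ref{ssec:dec} (the families $\dStar\subset\Bic\subset\Dec$), and the conditional Masur--Veech measure is the restriction of (a constant multiple of) a top power of the tautological class $\xi=c_1(\cO(-1))$ of the projectivised space of $k$-differentials. The degeneration of $\xi$ along these graphs is exactly the input of the limit construction of Section~\ref{sec:lim}: expanding $\xi^{\,2g-3+n}$ and pushing forward to $\oM_{g,n}$ rewrites the fibre integral as a universal polynomial in the classes $\psi_i$, $\kappa_m$ and the boundary divisors, plus terms of strictly smaller complexity. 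By the same evaluation used for $\widehat{{\rm Vol}}$ in the proof of Theorem~\ref{th:veech}, all of these reduce to the known integrals $\int_{\oM_{g',n'}}\psi^a\kappa^b$, so ${\rm Vol}(\alpha,k)$ becomes explicit.

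I expect the main difficulty to be twofold, and at both points the hypothesis that $\alpha$ has no integral entry is essential. First, one must show that $\nu_\alpha$ genuinely disintegrates with smooth conditional measures along $h_\alpha$ and pin down the normalising constant, so that the torsion fibres carry the honest Masur--Veech class and not a distorted multiple of it; this is the delicate measure-theoretic comparison. Second, and more seriously, one must check that the residue formula for $\xi$ on $\oM(\alpha,k)$ produces no contributions outside the inductive scheme. This fails precisely when some $\alpha_i$ is an integer: then $\M(\alpha,k)$ acquires the excess component $\M(\alpha,1)$ of dimension $2g-2+n$ (the mixed-dimension case of the table in Section~\ref{ssec:intromoduli}, see \cite{Sch}), which breaks both the purity of $\oM(\alpha,k)$ and the closure of the recursion, and moreover ${\rm Vol}(\alpha)$ then need not agree with $\widehat{{\rm Vol}}(\alpha)$. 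Excluding this is what confines Theorem~\ref{th:mv} to $\alpha$ with no integral entry.
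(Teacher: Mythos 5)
Your proposal follows essentially the same route as the paper: identify $\M(\alpha,k)$ with the holonomy fibres over $k$-torsion characters, compare the Masur--Veech measure with the top power of $\xi$ via the $U(p,q)$ area form (Lemmas~\ref{lem:measure1} and~\ref{lem:measure2}), and evaluate $a(\alpha,k)=\int_{\PP\oOm(\alpha,k)}\xi^{2g-3+n}$ by the boundary recursion of Theorem~\ref{th:relpsi}, with the hypothesis on $\alpha$ entering exactly where you say it does. The only points you leave implicit are that the normalising constant depends on $\alpha$ itself, through Veech's determinant $\det(h_\alpha)\propto\prod_{i}\sin(\pi\alpha_i)$ (which is why integral entries must be excluded in the measure comparison, not only in the geometry of $\oM(\alpha,k)$), and that closing the recursion requires the explicit Pixton/DR-cycle formula for the class of $\oM(\alpha,k)$ from \cite{BHPSS}, not merely the $\psi$--$\kappa$ calculus on $\oM_{g,n}$.
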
 

The computation of the different volume functions uses the line bundle $\mathcal{O}(-1)\to \M(k,a)$ locally generated by a meromorphic $k$-differential realizing the equality~\eqref{eq:picrel}. 
This line bundle carries a canonical hermitian metric: a non-vanishing section of $\mathcal{O}(-1)$ defines a family of flat surfaces, whose norm is the $k$-th power of the area of the fibers of this family (hence called the {\em area metric}).  We denote by $\omega(a,k)$ the curvature form associated with the dual hermitian metric and we set
\begin{equation}
\V(a,k)\coloneqq \int_{\M(a,k)} \omega(a,k)^{2g-3+n}.
\end{equation}
The proof of Theorem~\ref{th:veech}, and Theorem~\ref{th:mv} relies on the following strategy:
\begin{enumerate}
\item The volume ${\rm Vol}(a,k)$ is up to a simple factor equal to $\V(a,k)$  (Lemma~\ref{lem:measure1}).  
\item The integral $\V(a,k)$ can be expressed as the top interesection number of  $c_1(\mathcal{O}(-1))$ on an algebraic compactifiation of $\M(a,k)$ (see \cite{CosMoeZac} and Proposition~\ref{pr:goodarea}).
\item This intersection number can be computed explicitly by using the results on double ramification cycles of~\cite{BHPSS,CosSauSch}, and relations in the cohomology of moduli spaces of $k$-canonical divisors (Theorem~\ref{th:relpsi}).  
\end{enumerate}
The combination of these three points provides an explicit inductive procedure on $g$ and $n$ to compute the volumes ${\rm Vol}(a,k)$, thus proving Theorem~\ref{th:mv}.   The proof of Theorem~\ref{th:veech} is achieved by analyzing the asymptotic behavior of ${\rm Vol}(a,k)$ as $k$ goes to infinity (see Section~\ref{ssec:volumes}).   Indeed, if $a$ is rational,  then the Haar measure on $\mathbb{U}^{2g}$ is approximated by the counting measures of the subgroups $\mathbb{U}^{2g}_k$, which implies that
\begin{equation}
{\rm Vol}(a)=\underset{ka \in \ZZ^n}{\lim_{k\to \infty}} k^{-2g}\, {{\rm Vol}(a,k)}.
\end{equation}

\subsection{Flat recursion versus topological recursion} 
The functions $\V\colon\Delta_{g,n}^+\to \RR$ are determined by an inductive procedure on $g$ and $n$ that we call the {\em flat recursion} (FR).  
This name is chosen by analogy with the {\em topological recursion} (TR),    a theoretical frame that describes inductively the solutions to a broad class of enumerative problems, including  the  Weil-Petersson volumes of moduli spaces of hyperbolic surfaces with boundaries or conical singularities~\cite{Mir1,TanWonZha,DoNor}.  The FR does not fall in the TR formalism as it is defined by a larger class of graphs (see Section~\ref{ssec:FR}). However, a TR-type identity holds for vectors with an integral coordinate.

\begin{theorem}\label{th:tr} If $n\geq 3$, and $N$ is a positive integer then 
\begin{eqnarray}\label{for:tr}
N\frac{\partial\V}{\partial a_1}\bigg|_{a_1=N}  &=& \sum_{i=1}^n (a_i+N-1)\, \V(a_2,\ldots,a_{i-1},a_i+N-1,a_{i+1},\ldots)\\ \nonumber
&&+ \frac{1}{2} \int_{b=0}^{N-1} (N-1-b)b \,  \V(b,N-1-b,a_2,\ldots ) \, db\\ \nonumber
&& +\frac{1}{2}  \!\!\!\!  \sum_{\begin{smallmatrix} g_1+g_2=g \\  I_1\sqcup I_2=\{2,\ldots,n\} \end{smallmatrix}} b_1b_2 \, \V\left(b_1,  \{a_i\}_{i\in I_1}\right)\, \V\left(b_2, \{a_i\}_{i\in I_2}\right),
\end{eqnarray} 
where in the last sum we set $b_j=2g_j-2-\sum_{i\in I_j} (a_i-1)$ for $j=1$ or $2$.
\end{theorem}
Note that the LHS of~\eqref{for:tr} is well defined as  $\V$ is of class $C^1$. Furthermore, it determines the limit of the function ${\rm Vol}$ computed at strictly rational vectors along $a_1=N$.  Thus, this theorem states that Veech's volumes satisfy a TR-type relation at vectors with an integral coordinate.    

\sskip 

 We stress that this phenomenon is unexpected.  A TR-type identity typically reflects the compatibility of an enumerative problem with respect to the boundary of the moduli space of curves.  Theorem~\ref{th:tr} is proved by an intricate combinatorial argument, where  the only geometric input is the vanishing of the function $\V$ at vectors $a$ with integral coordinates.  This vanishing property is a consequence of the existence of isomonodromic deformations  and thus of an {\em isomonodromic foliation} on $\M(a)$ (transverse to the holonomy foliation).  The lack of a direct geometric argument to explain Theorem~\ref{th:tr} may be seen as a shadow of the transcendental nature of the isomonodromic foliation. 

\subsubsection*{A conjectural expression of Masur-Veech volumes} We propose a conjectural    expression of  Masur-Veech volumes as intersection numbers for general angle data (Conjecture~\ref{conj:int}, generalizing~\cite[Conjecture~1.1]{CheMoeSau}). If $a$ does not have integral coordinates, then ${\rm Vol}(a,k)$  is determined by $\V(a,k)$.  However,  if $a$ has integral coordinates then $\V(a,k)$ vanishes, so a general expression needs to involve other cohomology classes ($\psi$-classes here).  Theorem~\ref{th:tr} is a consequence of   Conjecture~\ref{conj:int} but our proof relies solely on the FR. Therefore, Theorem~\ref{th:tr} establishes the compatibility between the continuity of Veech's volume function 
and Conjecture~\ref{conj:int}. This is the first evidence towards this conjecture.

\subsection*{Acknowledgment} 

 This work stems from a question by Selim Ghazouani who presented to me the motivating problem and pushed me to delve into the seminal paper of Veech.  I would like to thank  Ga\"etan Borot,  Georges Comte, Dawei Chen, Bertrand Deroin, Alessandro Giachetto,  David Holmes, Martin Möller,  Gabriele Mondello,  Johannes Schmitt, and Dimitri Zvonkine for useful discussions.  This research was partially supported by the Dutch Research Council (NWO) grant 613.001.651 and conducted during research visits at Leiden University.

\section{From intersection theory to volumes}\label{sec:vol}

This section contains most of the paper's geometric content. It starts with the description of moduli spaces of flat surfaces by Veech, and the definitions of compactifications of moduli spaces of $k$-canonical divisors.  Then, we express Veech and Masur-Veech volumes as intersection numbers, and present the flat recursion formula.  The main theorems are proven, assuming the technical cohomological results established in the next sections. 

\subsubsection*{Conventions} In all the text $|\star|$ stands for the cardinal if $\star$ is a set, or the size (i.e. the sum of the coordinates)  if $\star$ is a vector. Besides, if $E$ is a set of real vectors, then a {\em rational pair} (or {\em pair} for short) of $E$ is a couple $(a,k)$ where $a$ is a rational vector of $E$, and $k$ is a positive integer such that $ka$ is integral.

\subsection{Teichmüller space and the holonomy map}

We fix a reference oriented surface $\Sigma$ of genus $g$ with a set of markings $X=\{x_1,\ldots,x_n\}\subset \Sigma$. We denote 
$$
\widehat{\mathcal{T}}_{g,n} = \left\{\begin{matrix}\text{flat metrics on $\Sigma$ with cone }\\ \text{singularities at the markings}\end{matrix} \right\}\bigg/  \begin{matrix}\text{ isometries isotopic to identity}\\\text{ + dilatations}\end{matrix} 
$$ 
the {\em Teichmüller space of flat surfaces of genus $g$ with $n$ singularities}, and by $\widehat{\M}_{g,n}$  the quotient of $\widehat{\mathcal{T}}_{g,n}$ by the action of the mapping class group: it is the moduli space of all flat surfaces of genus $g$ with $n$ markings. For all $a\in \Delta_{g,n}^+$, we denote by $\mathcal{T}(a)$ the preimage of $\M(a)$ for the quotient map. 

\sskip

We fix a reference point $x$ of $\Sigma\setminus X$.  The holonomy character of a metric $\eta$  is the morphism $\chi_\eta\colon\pi_1(\Sigma\setminus X, x) \to \UU$ defined by assigning to any loop the angle of the rotation of the tangent space at $x$ defined by parallel transport of the metric along the loop.  To represent this function, we fix generators $\delta_1,\ldots,\delta_n, \alpha_1, \beta_1,\ldots,\alpha_g,\beta_g$ of $\pi_1(\Sigma\setminus X, x)$ such that $\delta_i$ is the class of a simple loop around  $x_i$, and the unique relation is
$$
\prod_{i=1}^n \delta_i= \prod_{j=1}^g [\alpha_j,\beta_j]. 
$$  
With this presentation of the fundamental group, the holonomy character of a flat metric is the data of a vector $\lambda=(\lambda_i)_{i=1}^{2g+n} \in K\subset \UU^{2g+n}$,
the set defined by $\prod_{i=1}^{n} \lambda_i=1$ (the $\lambda_i$ are the images of the reference loops). For all $a$ in $\Delta_{g,n}^+$ we denote by $K(a)$ the subset of $K$ defined by imposing $\lambda_i=e^{2i\pi a_i}$ for all $i\in\{1,\ldots,n\}$.  Then, the holonomy character map is a globally defined function 
$${\rm hol}\colon \widehat{\mathcal{T}}_{g,n} \to K$$
that restricts to a function ${\rm hol}\colon \T(a)\to K(a)$.

\subsection{Geodesic triangulations and parameters of $\widehat{\T}_{g,n}$} 

We fix a  flat metric of $\widehat{\mathcal{T}}_{g,n}$. The {\em Voronoi diagram} of the metric is the cell decomposition of $\Sigma$ defined as follows: the union of the $2$-,$1$-, and $0$-cells are respectively the sets of points $x$ of $\Sigma$ for which the equality  
\begin{equation*}
d(x, X) ={\rm length}(\gamma)
\end{equation*}
holds for 1, 2, and at least 3 geodesic paths $\gamma$ from $x$ to $X$ respectively  (here $d$ is the distance induced by the metric). 

\sskip

The Voronoi decomposition of $\Sigma$ admits a canonical dual cell decomposition, the {\em Delaunay decomposition}, defined as follows: if $x$ is 0-cell of the Voronoi decomposition, then we consider the embedded disk $D$ of center $x$ and radius $d(x,X)$, and the 2-cells of the Delaunay decomposition are the interiors of the convex envelope of the intersection of $\partial D$ with $X$. A {\em Delaunay triangulation} is a triangulation of $\Sigma$ obtained from the Delaunay decomposition by adding $1$-cells which are chords of the polygonal 2-cells. This triangulation is {\em geodesic}, i.e. the set of 0-cells is $X$ and the $1$-cells are geodesic paths. Delaunay triangulations will be needed further, but for now, we only use the fact that geodesic triangulations exist.

\sskip

 To describe the local structure of $\widehat{\mathcal{T}}_{g,n}$, we fix a geodesic triangulation (which does not have to be a Delaunay triangulation).  
By unfolding this triangulation, we can extract oriented geodesic paths $\gamma_1,\ldots, \gamma_{2g-1+n}$ such that the complement of  these paths is connected and simply connected. This data will be called a {\em polygonal representation} of the flat surface. The paths $\gamma_i$ form a basis of the relative homology group $H_1(\Sigma, X, \ZZ)$. Additionally, we can construct an alternative generating family of $\pi_1(\Sigma\setminus X)$ by considering the loops $\check{\gamma}_i$ crossing $\gamma_i$ and such that the tangent vectors to $\check{\gamma}_i$ and ${\gamma}_i$ form an orthogonal basis of the tangent space of the surface (see Figure~\ref{figure:torus}).    With this data, the space $H^1(\Sigma, X,\CC\times \UU)$ has parameters 
$$
z=(z_i)_{1\leq i\leq 2g-1+n}, e=(e_i)_{1\leq i \leq 2g-1+n}
$$  
in the dual basis of $(\gamma_i)_{1\leq i\leq 2g-1+n}$. We consider the open set $H\subset H^1(\Sigma, X,\CC\times \UU)$ defined by the constraints
\begin{equation*}
    \left\{\begin{array}{l} \sum_{i=1}^n (1-e_i)z_i = 0,\text{ and}\\
    z_i\neq 0 \text{ for all $i\in \{1,\ldots,2g-1+n\}$}.
    \end{array}\right.
\end{equation*}
It has a $\CC^*$-action defined by re-scaling the $z$-parameter and we denote by $\PP H$ the quotient. Our original choice of metric (up to a scalar) determines a point of $\PP H$, and a chart of $\widehat{\mathcal{T}}_{g,n}$ around this metric is an open set of $\PP H$. To define the transition functions between two charts, we remark that polygonal representations are related by a sequence of operations of two types:
 \begin{enumerate}
 \item {\em Reversing the orientation} of a geodesic $\gamma_i$. The transition map for this operation is $(z_i,e_i)\mapsto (-z_i,e_i^{-1})$. 
 \item {\em Cut-and-paste} for 2 consecutive edges of the polygon defined by geodesics $\gamma_i$ and $\gamma_{i+1}$ with the same orientation, and defining a chord $\gamma_i'$.  We construct another polygon by removing the triangle with sides $(\gamma_i, \gamma_{i+1},\gamma_{i}')$, and gluing this triangle to the rest of the polygon along the 2 edges defined by $\gamma_i$. The associated transition map is   $$(z_i,z_{i+1},e_i,e_{i+1})\mapsto (z_i+z_{i+1}, e_iz_{i+1}, e_i, e_{i+1}e_i^{-1}).$$
 \end{enumerate}

 \sskip 
 
This construction of the atlas of $\widehat{\mathcal{T}}_{g,n}$ allows for the construction of a $\CC^*$-bundle by gluing together open sets of the spaces $H$ and the maps  $H\to \PP H$ with the same rules. This defines a complex line bundle that is invariant under the action of the mapping class group,  thus a line bundle $\mathcal{L}\to \widehat{\M}_{g,n}$.

\begin{figure}
\includegraphics[scale=0.36]{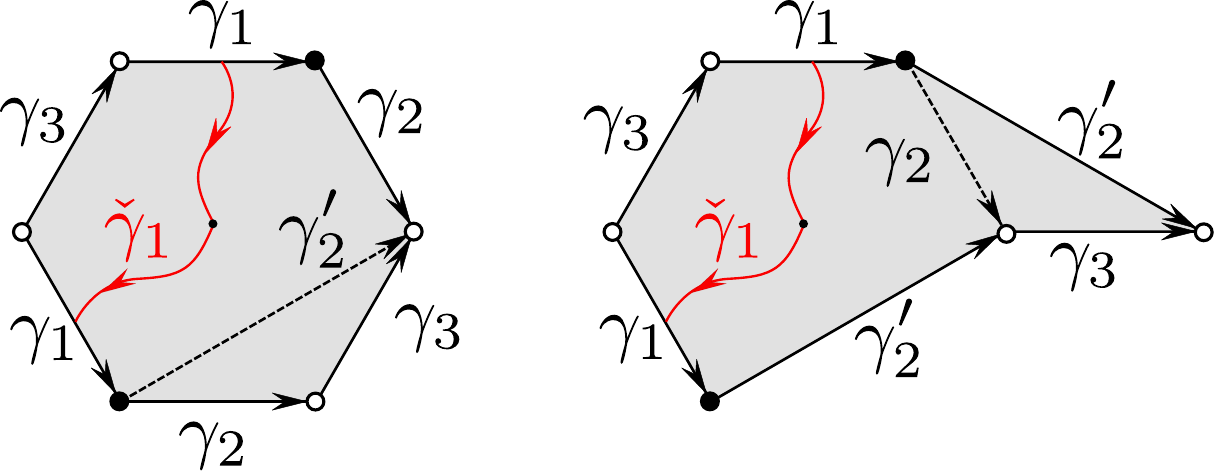}
\captionsetup{width=0.90\linewidth}
\caption{\label{figure:torus}Polygonal representation of a surface in $\M((2/3,4/3),6)$ related by a cut-and-paste operation. If $r=e^{2i\pi/6}$, then the coordinates of this surface in the left representation are $z=(-r,1,r)$ and $e=(r,-r,1)$. We also represented the dual path to $\gamma_1$ in red.}
\end{figure}

\sskip

With this differential structure, the holonomy character map is a continuous function:  the original choice of loops can be realized as a sequence of loops $\check{\gamma}_i^{\pm 1}$, and each $\lambda_j$ is a product of values $e_i^{\pm 1}$. Using this representation of the holonomy character map, Veech showed that ${\rm hol}$ is a submersion at a point $(z,e)$ if and only if $e\neq 1$~\cite[Theorem~0.3]{Vee1}. This defines a smooth foliation on $\T(a)$ outside of the locus of metrics with trivial holonomy. This foliation is invariant under the action of the mapping class group, thus defining a foliation on $\M(a)$. The leaves of the holonomy foliation have a complex structure for which the line bundle $\mathcal{L}$ is holomorphic. The smooth leaves are of complex dimension $2g-3+n$ while the singular ones are of dimension $2g-2+n$.  
 
\subsection{$\PP U(p,q,r)$-structure and volume forms}\label{ssec:forms}
Let $p, q,$ and $r$ be non-negative integers such that $p>0$.   Let $h$ be an hermitian quadratic form on $\CC^{p+q+r}$ of signature $(p,q,r)$, i.e. such that there exists a complex basis for which
\begin{equation}
h(x,y)=\sum_{i=1}^{p+q} h_i (x_i\overline{y}_i+\overline{x}_iy_i),
\end{equation}
with $h_i>0$  for $1\leq i\leq p$ and negative otherwise. The group $U(p,q,r)$ is defined as the group of matrices of size $(p+q)+r$ written in blocks as $
\begin{pmatrix}
A & B\\
0 & D
\end{pmatrix}$, with  $A$ in $U(p,q)$, $D$ diagonal, and $B$  any matrix of size $r\times(p+q)$.     We denote by $\mathcal{C}_h\subset \CC^{p+q+r}$ the positive cone, i.e. the set of vectors $x$ such that $h(x,x)>0$, and by ${\rm proj}\colon \cC_h\setminus\{0\} \to \PP \cC_h$ its projectivization.  We define 2 measures (in fact volume forms) on $\PP\cC_h$. The first one is 
\begin{equation}\label{def:nu1}
\nu_1(U) \coloneqq \text{Lebesgue measure}\left({\rm proj}^{-1}(U) \cap \{ x| h(x,x)\leq 1\} \right).
\end{equation}
The second is defined by considering the line bundle $\cO(-1)\to \PP_{\cC_h}$. It carries the hermitian metric equal to $h$ if we identify $\cO(-1)^*$ with $\cC_h^*$. We denote by $-\omega_h$ the curvature form of this hermitian metric. We define the second volume form as $\nu_2=\omega_h^{p+q+r-1}$. This second volume form is trivial if $r>0$ (while  the first one is not).

\begin{lemma}\label{lem:measure1}
We have $\nu_2=\frac{(p+q)! {\rm det}(h)}{\pi^{p+q}} \nu_1$.
\end{lemma}

\begin{proof} If $r>0$ then ${\rm det}(h)=0$ thus the lemma holds. For $r=0$, the proof is a generalization of the proof of Lemma~2.1 of~\cite{Sau4} and Lemma~2.1 of~\cite{CheMoeSau}. Using the action of the group $U(p+q)\cap U(p,q)$, we just need to compare the 2 forms on the set of vectors of the form $(x_1,0,\ldots,0,x_{p+1},0,\ldots)$.  We consider the chart of $\PP_{\cC_h}$ defined by $x_1=1$. In this chart, the measure $\nu_1$ is the measure associated with the differential form:
$$
\frac{2\pi }{h(x,x)^{p+q}{\rm dim}_\RR(\mathcal{C}_h)} \prod_{i=2}^{p+q}  (\frac{i}{2} dx_i\wedge d\overline{x}_i).
$$
In this same chart, the form $\omega_h$ is given by
$$
\omega_h=\frac{(h_1+h_{p+1} |x_{p+1}|^2) \, (\sum^{p+q}_{i=2} h_i dx_i\wedge d\overline{x}_i)- h^2_{p+1} |x_{p+1}|^2 dx_{p+1}\wedge d\overline{x}_{p+1}}{2i\pi(h_1+ h_{p+1} |x_{p+1}|^2)^2}.
$$
From this expression, we deduce the equality
\begin{eqnarray*}
\omega_h^{p+q-1}&=&\frac{(p+q-1)!\big(\prod_{i=1}^{p+q} h_i\big)}{(2i\pi)^{p+q-1} h(x,x)^{p+q}}\, \left(\prod_{i=2}^{p+q} dx_i\wedge d\overline{x}_i\right)= \frac{(p+q)! {\rm det}(h)}{\pi^{p+q}} \nu_2.
\end{eqnarray*}
\end{proof}

The line bundle $\mathcal{L}\to \widehat{\T}_{g,n}$ constructed in the previous section carries a hermitian metric $h$ defined as the area of the associated flat surface. We consider a chart of this bundle $U\to \PP U$  defined  by a polygonal decomposition. If $a$ is a vector of $\Delta_{g,n}^+$ and  $\lambda$ is a character in $K(a)$, then we consider the restriction of $\mathcal{L}^*$ to $\T(a,\lambda)={\rm hol}^{-1}(\lambda)$. We denote by $U(a,\lambda)$ the subset of $U$ obtained by intersection with the image of $\mathcal{L}^*|_{\T(a,\lambda)}$ under the chart map. The restriction of the area metric to $U(a,\lambda)$ is equal to a hermitian quadratic form $h_{a,\lambda}$ in the $z$-coordinates of $H^1(\Sigma, X, \CC)$ coming from the polygonal decomposition (and $U$ is contained in the positive cone for this quadratic form). The signature and the determinant of $h_{a,\lambda}$ only depend on $a$, and we have
\begin{equation}\label{for:det}
{\rm det}(h_{a,\lambda}) \, =\, \frac{\pi^{2g-2+n}}{(2g-2+n)!\, } q(a),
\end{equation}
where the function $q(a)$ is defined by~\eqref{eq:qexpresion} in the introduction~\cite[Lemma 14.32]{Vee1}.  The transition maps between charts defined by cut-and-paste operations are  matrices in $U(p,q,r)$ that preserve the Lebesgue measure~\cite[Theorem~13.14]{Vee1}. Therefore, we  can construct a measure ${\nu}_{a,\lambda}$ on $\T(a,\lambda)$  locally defined by~\eqref{def:nu1}. If we denote by  $\omega(a,\lambda)$ the curvature form of the dual of the area metric, then Lemma~\ref{lem:measure1} asserts that
\begin{equation}\label{for:measure1}
\nu_{a,\lambda}= q(a) \, \omega(a,\lambda)^{2g-3+n}.
\end{equation}

We use the holonomy character map, to define Veech's measure on the total space $\mathcal{T}(a)$.  Outside the locus of trivial holonomy, the form $\nu_{a,\lambda}$ depends smoothly on the parameter $\lambda$, and  defines a form in 
$$
\bigwedge \null^{2(2g-3+n)}\left(\Omega(\mathcal{T}(a)\big/ {\rm hol}^* \Omega(\mathbb{U}^{2g}) \right).$$
If we denote by $\nu_{\mathbb{U}^{2g}}$ the Haar volume form, then
\begin{equation}
\nu_{a}\coloneqq {\rm hol}^{*} \nu_{\mathbb{U}^{2g}} \wedge \nu_{\alpha,\lambda}
\end{equation}
is a volume form on $\mathcal{T}(a)$ defined outside the locus of trivial holonomy which is of positive co-dimension in $\mathcal{T}(a)$. Therefore,  the form $\nu_a$ defines a measure on $\T(a)$.  This measure does not depend on the representation of the fundamental group of $\Sigma\setminus X$, and is invariant under the action of the mapping class group on $\mathcal{T}(a)$ (see~\cite[Lemma 13.6]{Vee1}). Therefore, it defines a measure $\nu_a$ on the moduli space of flat surfaces $\M(a)$.

\subsection{Three compactifications of $\M(a,k)$}\label{ssec:compactification}

Let $(a,k)$ be a rational pair of $\Delta_{g,n}$. The space $\M(a,k)$ is an algebraic sub-space of $\M_{g,n}$ ~\cite{Sch}. We denote by $\oM(a,k)$ the Zariski closure of $\M(a,k)$ inside the moduli space of stable curves $\oM_{g,n}$.  This space is called the {\em Deligne-Mumford compactification} of $\M(a,k)$.  A drawback of this first compactification is that the line bundle $\mathcal{L}$ considered in the previous section does not extend to the boundary. Yet, this space plays a central role here as it is used to define double ramification cycles (see Section~\ref{sec:DR}).

\sskip

 A {\em $k$-differential of type $a$} is the data of a  smooth marked  curve $(C,x_1,\ldots, x_n)$ and a meromorphic $k$-differential  $\eta$ such that $
{\rm ord}_{x_i}\eta= ka_i -k$
for all $i\in \{1,\ldots,n\}$. We denote by $\Omega(a,k)$ the {\em moduli space of $k$-differentials of type $a$}.  It is a cone over $\M_{g,n}$ and we denote by $\PP\Omega(a,k)$ its projectivization. As a $k$-canonical divisor determines a unique $k$-differential up to a scalar, the space $\PP\Omega(a,k)$ is  isomorphic to $\M(a,k)$.   To define the second compactification of $\PP\Om(a,k)$ (or equivalently $\M(a,k)$),  we fix a vector of positive integers $P=(p_1,\ldots,p_n)$.  We denote by $\pi\colon\oC_{g,n}\to \oM_{g,n}$  the universal curve and by $\sigma_i\colon\oM_{g,n}\to \oC_{g,n}$ the section associated with the $i$-th marking for $i\in \{1,\ldots,n\}$.  For $P$ sufficiently large,  the space
\begin{equation}
V_P = \pi_*\omega^{\otimes k}_{\rm log}\left( \sum_{i=1}^n P_i\sigma_i\right),
\end{equation}
 is a vector bundle over $\oM_{g,n}$ and we have a canonical embedding of $\Om(a,k)$ in $V_P$.  We denote by $\PP\oOm(a,k)$ the closure of $\PP\Om(a,k)$ in $\PP V_P$.  This space is the {\em incidence variety compactification} of $\PP\Om(a,k)$ and  does not depend on the choice of $P$.  The line bundle $\mathcal{O}(1)$ is canonically isomorphic to $\mathcal{L}^{-\otimes k}$, and we denote $\xi=c_1(\mathcal{O}(1))\in H^2(\PP\oOm(a,k),\QQ)$.

 \sskip
 
 There is  a birational morphism $\PP\oOm(a,k)\to \oM(a,k)$ defined by forgetting the differential (which is not an isomorphism in general) but  both spaces are singular.  A third compactification called the {\em moduli space of multi-scale differentials}, has been constructed in~\cite{BCGGM3,CosMoeZac}.   It is a de-singularization of the incidence variety compactification $\PP\Xi\oM(a,k)\to \PP\oOm(a,k)$. We will not recall the precise description of this space here,  although it plays an important role in the present work as it was needed to establish the following comparison result.
 
 \begin{proposition}[Theorem~1.4 of \cite{CosMoeZac}]\label{pr:goodarea} If $a$ is positive, then for all $d\geq 0$ the cohomology class of the current associated to $\omega(a,k)^d$  is equal to $\xi^d$ in $H^{2d}(\PP\oOm(a,k),\QQ)$. 
 \end{proposition}
 This proposition together with~\eqref{for:measure1} provides the following expression of Masur-Veech volumes. \begin{proposition}\label{pr:volint} The integral $\V(a,k)$ vanishes unless  the vector $a$ has no integral coordinates. In this case, we have
 \begin{equation}\label{for:volint}
     q(a){\rm Vol}(a,k)\ =\  \int_{\PP\Om(a,k)} \left(k^{-1}\omega(a,k)\right)^{2g-3+n} \ =\ \int_{\PP\oOm(a,k)} \left(k^{-1}\xi\right)^{2g-3+n}.
 \end{equation}
 \end{proposition}
 
 \subsection{Flat recursion formula}\label{ssec:FR}
 
 Here, we construct the functions $\V\colon \Delta_{g,n}^+ \to \RR$ of Theorem~\ref{th:veech}  by induction on $g$ and $n$. The base of the induction is  $\V\left(\Delta_{0,3}^+\right)=1$, and $\V(\Delta_{g,1}^+)=0$ for all $g\geq 1$. If $2g-2+n>1$, we introduce combinatorial structures to run the inductive procedure. A {\em star graph} of genus $g$ with $n$ legs  is the datum of \begin{itemize}
\item a vector  $(g_0,g_1,\ldots,g_\ell)$ of non-negative integers;
\item a vector of positive integers $(e_1,\ldots,e_\ell)$;  if we denote by $e=\sum_{j=1}^\ell e_j$ and $h^1(\Gamma)=e-\ell$, then the constraint $g=h^1(\Gamma)+\sum_{j=1}^\ell g_j$ holds;
\item a partition of set $\{1,\ldots,n\} = L_0\sqcup \ldots \sqcup L_\ell$ (we denote $n_j$ the cardinal of $L_j$).
\end{itemize}
We denote ${\rm Star}^1_{g,n}$ the set of star graphs such that $1\in L_0$.  Let $\Gamma$  be a star graph in ${\rm Star}^1_{g,n}$, and let $a$ be a vector of $\Delta_{g,n}^+$. The set $\Delta_\Gamma(a)$ of {\em twists on $\Gamma$ compatible with $a$}  is the set of vectors $b=(b_{1,1},\ldots,b_{1,e_1},b_{2,1},\ldots,b_{\ell,e_{\ell}}) \in \RR_{>0}^e$ such that 
 \begin{equation}
 \sum_{i \in L_j} a_i -1+ \sum_{i=1}^{e_j} b_{j,i} = 2g_j-2
 \end{equation}
 for all $j \in \{1,\dots, \ell\}$. This set is an open simplex of dimension $h^{1}(\Gamma)$. Finally, we introduce the polynomial function
\begin{eqnarray}\label{def:A}
\mathcal{A}\colon\Delta_{g,n}&\to& \RR \\
\nonumber a &\mapsto&  [z^{2g}] \,\, {\rm exp}\!\left(\frac{a_1z\cS'(z)}{\cS(z)}\right) \frac{(-a_1)^{2g-3+n} }{\cS(z)^{2g-1+n}}\prod_{i=2}^n \cS(a_iz),
\end{eqnarray}
where $\cS(z)=\frac{{\rm sinh}(z/2)}{z/2}$, and the notation $[z^{2g}]$ stands for the coefficient of $z^{2g}$ in the formal series.  The presence of these functions in the recursive definition of the function $\V$,  accounts for the large $k$ limit of integrals of $\psi$-classes on moduli spaces of $k$-canonical divisors (see~\cite{CosSauSch} and Section~\ref{sec:FR}). With this notation, we set
\begin{eqnarray}
\label{for:FR} \V(a)&\coloneqq &   \sum_{\Gamma \in {\rm Star}^1_{g,n}} \frac{1}{\ell !} \V_\Gamma(a), \text{ where} \\
\label{for:FRgamma}
\V_\Gamma(a)&\coloneqq &   
 \!\!\! \int_{b \in \Delta_\Gamma(a)}  \mathcal{A}\left((a_i)_{i\in L_0}, (- b_{j,i})_{\begin{smallmatrix} 1\leq j\leq \ell\\ 1\leq i\leq e_j \end{smallmatrix}}\right)     \\
&& \nonumber \times \prod_{j=1}^{\ell} \left( \frac{\left(\prod_{\begin{smallmatrix}  1\leq i\leq e_j \end{smallmatrix}}b_{j,i}\right)}{e_j!}\,  \V \left((a_i)_{i\in L_j},(b_{j,i})_{1\leq i\leq e_j}\right) \right) db.
\end{eqnarray}
The defining Formula~\eqref{for:FR} will be called the {\em flat recursion formula}. The following theorem will be established in Section~\ref{sec:FR} based on the preliminary results of Section~\ref{sec:local} and~\ref{sec:DR}.

\begin{figure}
$$\xymatrix@=1.5em{
a_{2}\ar@{-}[d]
&& a_3 \ar@{-}[d]   \\ 
*+[Fo]{3}\ar@/_/@{-}[rd]_{b_{1,1}} \ar@/^/@{-}[rd]^{b_{1,2}}   && *+[Fo]{2} \ar@{-}[ld]^{b_{2,1}}   \\ 
&*+[Fo]{1}, \ar@{-}[ld]\\
a_1
}$$
\captionsetup{width=0.90\linewidth}
\caption{Graphical representation of a star graph in ${\rm Star}_{7,3}^1$.\label{ex:star}  The values  $g_i$ are denoted at each vertex 
and the domain $\Delta_{\Gamma}(a)$ is the set of positive triples  $(b_{1,1},b_{1,2},b_{2,1})$ satisfying $b_{2,1}=4-a_3,$ and $b_{1,1}+b_{1,2}=7-a_2$. It is empty if $a_2\geq 7$ or $a_3\geq 4$. }
\end{figure}

\begin{theorem}\label{th:fr}
The intersection numbers $\V(a,k)$ can be   computed explicitly for all rational pairs $(a,k)$ of $\Delta_{g,n}^+$. Moreover, there exists a positive constant $C$ such that
\begin{equation}
\left| k^{-4g+3-n}\V(a,k) - \V(a)\right| < Ck^{-1}
\end{equation}
for all pairs.
\end{theorem}

\subsection{Computation of (Masur-)Veech volumes}\label{ssec:volumes}

We explain how to use Theorem~\ref{th:fr}   to complete the proof of Theorems~\ref{th:veech},~\ref{th:veech1}, and~\ref{th:mv}. First, let $a$ be a rational vector of $\Delta_{g,n}^+$ without integral coordinate, and let $k$ be a positive integer such that $ka$ is integral.  We have ${\rm Vol}(a,k) = q(a)^{-1}k^{-2g+3-n} \V(a,k)$ by Proposition~\ref{pr:volint}. The first part of Theorem~\ref{th:fr} implies that $\V(a,k)$ is explicitly computable and so is ${\rm Vol}(a,k)$. This completes the proof of Theorem~\ref{th:mv}.

\subsubsection{Convergence for large $k$-values} To complete the proof of Theorem~\ref{th:veech}, we consider the sequence of measures on $\M_{g,n}$ defined by $\nu_{a,k}(U)=\nu_{a,k}(U\cap \M(a,k))$ for all open sets $U\subset \M_{g,n}$ (here $\nu_{a,k}$ is the Masur-Veech measure).  The  measure $\nu_a$ is the weak limit of the measure $k^{-2g}\nu_{a,k}$ as $k$ goes to infinity: indeed  the uniform probability measures of $K(a,k)$ (the subset of $K(a)$ of vectors with coordinates in $\UU_k$) weakly converge to the Haar measure on $K(a)$. Then, the following inequality holds
\begin{eqnarray}
{\rm Vol}(a)& \leq&\underset{ka \in \ZZ^n}{\lim_{k\to \infty}} {k^{-2g}} {\rm Vol}(a,k) \\
\nonumber &=& q(a)^{-1}\V(a) < +\infty.
\end{eqnarray}
The equality between the first and second lines is the consequence of Theorem~\ref{th:fr}.  Theorem~\ref{th:veech} will be established by showing that the first inequality is an equality. To do so, we recall that the space $\M(a)$ may be partitioned into $\bigsqcup_D \M_D$ where $D$ runs over the  classes of decompositions up to isotopy.  These classes can be encoded as maps on surfaces as in~\cite{Ngu} or~\cite{LanZvo}, and there are finitely many such combinatorial objects. 
The space $\M_D$ associated with a class of decompositions $D$ is the locus of flat surfaces with Delaunay decomposition in the class $D$. It is of null measure unless $D$ is  a triangulation so  we have
$$
\nu_a(\M_{g,n})= \sum_{\begin{smallmatrix}\text{$D$ class }\\\text{of triangulations} \end{smallmatrix}} \nu_a(\M_D). 
$$
We fix a class of triangulation $D$ and we will show  that 
$$\nu_a(\M_D)=\underset{ka \in \ZZ^n}{\lim_{k\to \infty}}   k^{-2g}\nu_{a,k}(\M_D).$$
We chose a polygonal decomposition associated with the Delaunay triangulation. It determines local parameters $(z_i,e_i)_{1\leq i\leq 2g-1+n}$ of $\widehat{\T}_{g,n}$. We denote by  $K_D(a)$ the subset of $\UU^{2g-1+n}$ defined by imposing that the cone angles of the flat surfaces are prescribed by $a$: the monodromy around a marking is a product of coordinates $e_i^{\pm 1}$ so this constraint is a polynomial equation. We denote by $K_D(a,k)$ the intersection of $K_D(a)$ with $\UU_k^{2g-1+n}$. 

\sskip

 We recall that a Delaunay decomposition is dual to a Voronoi diagram. The Delaunay decomposition is a triangulation if and only if each vertex of the Voronoi diagram is trivalent.  We denote by $H_D$ the subset of  $\CC^{2g-1+n}\times K_D(a)$ defined by \begin{equation*}
    \left\{\begin{array}{l} \sum_{i=1}^n (1-e_i)z_i = 0,\text{ and}\\
    {\rm length}(\gamma) > 0 \text{ for all edges $\gamma$ of the Voronoi diagram.}
    \end{array}\right.
\end{equation*}
The lengths of the edges of the Voronoi Diagram are rational functions in the parameters $z_i$ and $e_i$, thus  $H_D$ is a subset of an affine space defined by a finite set of algebraic equalities and inequalities. We denote by $U_D\subset H_D$ the set of flat surfaces of area at most one (this constraint is again an algebraic inequality). The space $\M_D$ is the quotient of $\PP H_D$ by the automorphism group of $D$ so we have
$$
\nu_a(\M_D)= \frac{1}{\big|{\rm Aut}(D)\big|}\text{Lebesgue measure $\times$ Haar measure}(U_D).
$$
We denote by $f\colon K_D(a)\to \RR_{\geq 0} \cup \{\infty\}$ the function defined by integration of the Lebesgue measure in the fiber of a point via the holonomy foliation.  There exists a finite partition of $K_D(a)$ into subanalytic sets such that the function $f$ is either infinite or a polynomial in finitely many subanalytic functions of the parameters of $K_D(a)$ and their logarithms  ~\cite[Theorem 1]{ComLioRol}. For each subset $E\subset K_D(a)$ where the function $f$ is continuous and finite
$$\int_E f=\underset{ka \in \ZZ^n}{\lim_{k\to \infty}}   \frac{1}{\left| K_D(a,k)\cap E\right|} \sum_{e\in  K_D(a,k)\cap E} f(e).$$
Moreover, if a subset $E$ where $f$ is infinite had a non-zero measure then $E$ would have at least one point in $K_D(a,k)$ for $k$ sufficiently large which would contradict the finiteness of Masur-Veech volumes. Altogether we get
$$
\nu_a(\M_D) =  \frac{1}{\big|{\rm Aut}(D)\big|} \int_{K_D(a)} f = \underset{ka \in \ZZ^n}{\lim_{k\to \infty}}   \frac{1}{\left| K_D(a,k) \right|} \sum_{e\in  K_D(a,k)} f(e) = \underset{ka \in \ZZ^n}{\lim_{k\to \infty}}   \nu_{a,k}(\M_D).
$$

\subsubsection{Wall-crossing properties of the flat recursion}  The function $\V$ is a continuous piece-wise polynomial of degree at most $4g-3+n$ on $\Delta_{g,n}^+$. This can be proved by induction on $g$ and $n$ (with base $(g,n)=(0,3)$).  Indeed, if  $\Gamma$ is a star graph, then the contribution of $\V_\Gamma$ to the flat recursion relation~\eqref{for:FRgamma} is the integral of a piece-wise polynomial on a simplex whose boundaries depend linearly on the parameter $a$. As the function integrated is continuous on the closure of the domain of integration (which is compact), this integral is continuous. The degree of $\V_{\Gamma}$ is at most
\begin{equation*}
\underset{\text{degree of $\mathcal{A}$}}{\underbrace{4g_0 - 3+e+n_0 }} + \!\!\underset{\text{edge factor}}{\underbrace{e}} \!\!+ \sum_{j=1}^\ell \underset{\text{degree of $\mathcal{\V}$}}{\underbrace{4g_j - 3+e_j+n_j}}  + \!\!\underset{\text{dimension of $\Delta_\Gamma$ }}{\underbrace{h^1(\Gamma) }} \!\!\leq 4g-3+n.
\end{equation*}
 We have a better characterization of the regularity of $\V$ at vectors with integral coordinates.
\begin{lemma}\label{lem:regularity}
If $2g-2+n>1$ and $n\geq 2$, then the following properties hold:
\begin{enumerate}
\item if $(g,n)\neq (0,3)$, then $\V(0,\ldots,0,2g-2+n)=0$; 
\item for all $i\in\{1,\ldots,n\}$ and $N$ positive integer, there exists a continuous piece-wise polynomial $\widetilde{\V}$ such that
$$
\V\, =\, \left\{ \begin{array}{cl} (a_i-N) \widetilde{\V} & \text{ if $n\geq3$},\\
(a_i-N)^2 \widetilde{\V} & \text{ if $n=2$} \end{array} \right. 
$$  
in a neighborhood of the line $a_i=N$.
\end{enumerate}
\end{lemma}

\begin{proof}
To prove the first point, we observe that the contribution of a star graph to the flat recursion formula is divisible by $a_1$ unless $g_0=0$ and $e+n_0=3$ (because the $\mathcal{A}$-term is divisible by $a_1^{2g_0-3+e+n_0}$). However, in this situation, the domain $\Delta_\Gamma$ is empty so the associated contribution is trivial. 

\sskip

For the second point, we first remark that the rational numbers $\V(a,k)$ are invariant under permutation of the coordinates of $a$ so the same holds for $\V$ after passing to the large $k$ limit by Theorem~\ref{th:fr}. So we can choose $i=2$ without loss of generality.  Besides, the rational number $\V(a,k)$ is of the sign of $q(a)$  so the same holds for $\V(a)$ by the same arguments. In particular, $\V$ vanishes at vectors $a$ with an integral coordinate. 
\footnote{We do not know how to prove these properties of the function $\V$ using only its definition~\eqref{for:FR}.}

\sskip

The walls of the chambers of polynomiality of $\V$ are defined by equations of the form $\sum_{i\in I} a_i = N$ for a positive integer $N$ and a subset $I$ of $\{1,\ldots,n\}$. We chose a point of the wall defined by $\{a_2=N\}$ that does not sit on another wall. Around this point, we have
$$
\V=P + \mathbf{1}_{a_2< N}\,  Q,
$$
where $P$ and $Q$ are polynomials, and $\mathbf{1}_{a_2\leq N}$ is the function equal to 1 for $a_2<N$ and $0$ otherwise.  We will prove that $Q$ vanishes with multiplicity $2$ along $\{a_2=N\}$ if $n\geq 3$, and with multiplicity $3$ if $n=2$. The lemma follows from this property: the polynomial $P$ vanishes along $\{a_2=N\}$ because $\V$ does; if $n=2$, then $P$ vanishes with multiplicity at least $2$ because $\V$ is of constant sign.

\sskip

We denote by $S\subset {\rm Star}_{g,n}^1$, the set of star graphs such that $L_j=\{2\}$ for some $j$ in $\{1,\ldots, \ell\}$. The polynomial $Q$ is the sum of the contributions defined by~\eqref{for:FRgamma} for graphs in $S$. If $\Gamma$ is a graph in $S$ then the associated contribution is of the form:
\begin{equation*}\label{for:FRgamma1}
\V_\Gamma=\int_{b_{j,1}+\ldots+b_{j,e_j}=a_2-N} b_{j,1}\ldots b_{j,e_j}  Q_\Gamma(a,b_j) \,\, db_{j}.
\end{equation*}
This shape is obtained by performing the integration over the parameters $b_{j',i}$ for $j'\neq j$ in~\eqref{for:FRgamma}. This polynomial vanishes with multiplicity at least $2e_1+1$ along $\{a_2=N\}$. Therefore, we denote by $S_1\subset S$  the set of graphs such that the vertex with the second legs has 1 (respectively more than 1) edge. Moreover, we denote by $Q_1$ and $Q_{>1}$ the sum of contributions of graphs in $S_1$ and $S\setminus S_1$ respectively.  We have $Q=Q_1+Q_{>1}$, and $Q_{>1}$ vanishes with multiplicity at least $3$ along $\{a_2=N\}$, so it remains to study $Q_1$.

\sskip 

The polynomial $Q_1$ is of the form 
$$
(a_2-N)\, \V(a_2, a_2-N)\,  \widetilde{Q}_1(a_2-N,\{a_i\}_{i\neq 2}),
$$
as the twist $b_{j,1}$ is given by $a_2-N$. Therefore $Q_1$ vanishes with multiplicity at least 2 by the first point of the lemma, and the second point of the lemma holds when $n\geq 3$. If $n=2$, then we have the equality:
$$\widetilde{Q}_1\bigg|_{a_1=2g-N,a_2=N}=\V(2g-N,0).$$ 
 Therefore $\widetilde{Q}_1$ vanishes along $\{a_2=N\}$ by the first point of the lemma, and $Q_1$ vanishes with multiplicity at least 3. 
\end{proof}

\begin{proof}[Proof of Theorem~\ref{th:veech1}] Lemma~\ref{lem:regularity} implies that the volume function computed at strictly rational vectors of $\Delta_{g,n}^+$ extends to a continuous function $\widetilde{\rm Vol}$ on $\Delta_{g,n}^+$. Furthermore, for any compact set $K$ of $\M_{g,n}$, the measure $\nu_a(K)$ is a continuous function of $a$. Thus, ${\rm Vol}$ is a lower semi-continuous function that equals to $\widetilde{\rm Vol}$ on a dense set of points of $\Delta_{g,n}^+$. As a result,  ${\rm Vol}\leq \widetilde{\rm Vol}<+\infty$ on $\Delta_{g,n}^+$,  and ${\rm Vol}(a)= \widetilde{\rm Vol}(a)$ for almost all values of $a$.
\end{proof} 

 \subsection{Flat surfaces with integral singularities}\label{ssec:tr} 
 
 We have seen that Proposition~\eqref{pr:volint} cannot be used to compute Masur-Veech volumes for vectors $a$ with integral coordinates. The following conjecture provides a more general expression of Masur-Veech volumes as intersection numbers.
 \begin{conjecture}\label{conj:int}
 Let $m\geq 0$, and let $(a,k)$ be a rational pair $\Delta_{g,n}^+$ such that $a_i$ is integral for all $i\leq  m,$ and strictly rational otherwise. If $k\geq 2$, then we have
\begin{eqnarray}
\label{eq:volexpresionINT} {\rm Vol}(a,k) &=& \frac{1}{k^{2g-3+n}q_m(a)} \, \int_{\PP\oOm(a,k)} \xi^{2g-3+n-m} \prod_{i=m+1}^n \psi_i, \\
\label{eq:qexpresionINT} \text{ where }\,\,\, q_m(a)&=& \frac{(-1)^{g-1+n+\sum_{i=1}^m a_i}}{4\, (2\pi)^{2g-2+n-m}}{{(2g-2+n)! \prod_{i=m+1}^n 2\, {\rm sin}(a_i\pi)}},
\end{eqnarray}
and the class $\psi_i\in H^2(\oM_{g,n},\QQ)$ is the Chern class of the cotangent line at the $i$-th marking. 
 \end{conjecture}
 If this conjecture holds, then all Masur-Veech volumes can be explicitly computed. Furthermore, this conjecture predicts the value of the volume of $\M(a)$ for all rational vectors $a$. For simplicity, we restrict to the case $m=1$. For all $i\in \{1,\ldots,n\}$, we denote  \begin{equation}
 \V^i(a,k)\coloneqq \int_{\PP\oOm} \psi_i\,  \xi^{2g-4+n}.
 \end{equation}
  The following lemma is proved in the same way as Theorem~\ref{th:fr}.
  \begin{lemma}\label{lem:defvpsi} Let $N$ be a positive integer. There exists a piece-wise polynomial $\V^i\colon \Delta_{g,n}^+ \to \RR$ and a positive constant $C$ such that 
\begin{equation}\Big| k^{-4g+4-n} \V^i(a,k)-\V^i(a)\Big| < C\, k^{-1}
\end{equation}
for all pairs $(a,k)$ with $a_i=N$. Futhermore,  $N\, \V^1|_{a_1=N}$ is equal to the RHS of~\eqref{for:tr}.
 \end{lemma}
 Theorem~\ref{th:tr} will be a direct corollary of the following theorem.
\begin{theorem}\label{th:psider}
If $N$ is a positive integer, then 
\begin{equation}\label{for:psider}
\left( \frac{\partial \V}{\partial a_i}-\V^i \right)\bigg |_{a_i=N}=0.
\end{equation}
\end{theorem}
 If $a$ is a rational vector such that $a_1$ is integral while the other entries are strictly rational,  then Theorem~\ref{th:psider} implies that
\begin{equation}
\underset{a'\to a}{\lim_{a'\in (\QQ\setminus\ZZ)^n}} {\rm Vol}(a') = q_1(a)\,  \V^1(a).
\end{equation}
The LHS of this equality is equal to ${\rm Vol}(a)$ if we assume that the volume function is continuous, while the RHS is equal to ${\rm Vol}(a)$ if we assume that Conjecture~\ref{conj:int} holds.\footnote{Similar results  hold for general value of $m$   and can be deduced from the case $m=1$. We do not include these results here as a general study of the combinatorics of Conjecture~\ref{conj:int} will be done in subsequent work with Chen and Möller.}

\section{Intersection theory of moduli spaces of $k$-differentials}\label{sec:local}

In this section, we review the results of~\cite{BCGGM2} on the incidence variety compactification.   Moreover, we establish a set of relations satisfied by cohomology classes of boundary components (Theorem~\ref{th:relpsi}).  These relations generalize the results of~\cite{Sau} (in the case  $k=1$)  and will be used in the following sections to explicitly compute  the intersection numbers involved in the expression of the various volume functions.

\subsection{Canonical cover and residue conditions}\label{ssec:res}
Let $(a,k)$ be a rational pair of $\Delta_{g,n}$.  If $(C,x_1\ldots,x_n,\eta)$ is a $k$-differential in $\Omega(a,k)$, then  we define 
 $$
\widehat{C}\coloneqq \left\{ (x,v)\in T_{C}^\vee, \text{ such that } v^{k} =\eta\right\}. 
$$
The map $f\colon\widehat{C}\to C$ is a cyclic ramified cover of degree $k$.  The curve $\widehat{C}$ carries a canonical differential $v$ such that $v^{k}=f^*\eta$. Each point $x_i$ with singularity of order $m$ has ${\rm gcd}(m,k)$ preimages under $f$ with  ramification order $k/{\rm gcd}(m,k)$. Besides, the order of $v$ at each point is determined by $a$.  Therefore,  a pair $(a,k)$  determines a triple $(\widehat{g}, \widehat{n},\widehat{a})$ such that we have an embedding 
$$
\Omega(a,k)\hookrightarrow \Omega(\widehat{a},1) \Big/ \mathbb{U}_k,
$$
where the $\mathbb{U}_k$-action is defined by permuting the labels of preimages of  a singularity.  This morphism is the {\em canonical cover morphism}.

\sskip


Let $i$ be an element in $\{1,\ldots,n\}$ such that $a_i$ is a negative integer, and let $(C,x_1,\ldots,x_n,\eta)$ be a differential in $\Omega(a,k)$.  The point  $x_i$ has $k$ preimages under the canonical cover map.  These preimages are poles of order $a_i$ of the differential on the covering curve,  and the residues at two such points  differ by a $k$-th root of unity.  The {\em residue} at $x_i$ is defined as the $k^{\rm th}$ power of any of these residues. 
We denote by $\res_i\colon \Omega(a,k)\to \CC$ the {\em $i$th residue morphism}, i.e.  the morphism defined by mapping $\eta$ its at $x_i$. 
 
 \sskip
 
If $R\subset \{1,\ldots,n\}$ is a set of indices $i$ such that $i\in \ZZ_{<0}$,  then the moduli space of $k$-differentials {\em with residue conditions} $\Om^R(a,k)$ is the sub-space of $\Om(a,k)$ of differentials with vanishing residue at $x_i$ for $i\in R$.  We denote by $\PP\Om^R(a,k)$ its projectivization and by $\PP\oOm^R(a,k)$ the closure of $\PP\Om^R(a,k)$ in $\PP\oOm(a,k)$ (still called the incidence variety compactification).  If $i$ is an element of $\{1,\ldots,n\}\setminus R$ such that $a_i\in \ZZ_{<0}$, then the morphism $\res_{i}$ is a section of the line bundle $\mathcal{O}(1)\to \PP\Om^R(a,k)$ that extends to the boundary of the incidence variety compactification. 

\begin{lemma}\label{lem:vanres} The section $\res_{i}$ vanishes with multiplicity $k$ along $\PP\Om^{R\cup\{i\}}(a,k)$.
\end{lemma}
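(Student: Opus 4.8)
The plan is to analyze the residue section near a generic point of the boundary divisor $\PP\Om(\alpha,k,E\cup\{i\})$ inside $\PP\oOm(\alpha,k,E)$, using the local description of the incidence variety compactification recalled from \cite{BCGGM2}. The residue $\res_i$ is, by definition, the $k$-th power of the residue of the canonical differential $v$ at any of the $k$ preimages of $x_i$ on the canonical cover $\widehat C$. So I would first reduce to the abelian case: pull everything back along the canonical cover morphism $\Om(\alpha,k)\hookrightarrow \Om(\widehat\alpha,1)/\mathbb{U}_k$, where the pole $x_i$ has $k$ preimages $\widehat x_{i,1},\dots,\widehat x_{i,k}$, and $\res_i(\eta) = r^k$ where $r=\res_{\widehat x_{i,1}}(v)$. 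In the abelian setting the residue of a 1-differential at a simple or higher-order pole is a well-understood linear functional, and its vanishing locus is exactly the divisor where that residue becomes zero; the statement to prove is then that $r^k$, viewed as a section of $\cO(1)$, vanishes to order $k$ along the locus $\{r=0\}$.

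The key steps, in order: (1) Fix a generic point $\xi_0$ of $\PP\Om(\alpha,k,E\cup\{i\})$ and choose local analytic coordinates on $\PP\oOm(\alpha,k,E)$ adapted to this divisor, so that the divisor is cut out by a single coordinate $t$; here I use that $\PP\Om(\alpha,k,E\cup\{i\})$ is (at a generic point) a smooth divisor in $\PP\oOm(\alpha,k,E)$ — this should follow from the local structure results of the section, since imposing one more residue-vanishing condition drops the dimension by one transversally. (2) Trivialize $\cO(1)$ near $\xi_0$ and write $\res_i$ as a holomorphic function $F(t,\dots)$ in these coordinates; by construction $F$ vanishes on $\{t=0\}$, so $F = t^{\mu}\cdot(\text{unit})$ for some $\mu\ge 1$, and the claim is $\mu=k$. (3) Compute $\mu$ by an explicit local model: near the node being formed, a family of $k$-differentials degenerating to $\PP\Om(\alpha,k,E\cup\{i\})$ looks (after passing to the canonical cover) like a family of 1-differentials $v_t$ whose residue $r(t)$ at $\widehat x_{i,1}$ is a coordinate transverse to the divisor $\{r=0\}$, i.e. $r(t) = c\,t + O(t^2)$ with $c\ne 0$ — so $r$ vanishes simply, and $\res_i = r^k$ vanishes with multiplicity exactly $k$. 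The point is that the residue of the 1-differential on the cover, not its $k$-th power, is the natural "unscaled" coordinate in the abelian compactification, and the factor of $k$ comes precisely from raising to the $k$-th power.

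The main obstacle I expect is step (3): making precise that $r$ (the 1-differential residue on the canonical cover) vanishes with multiplicity exactly one — not higher — along the divisor $\{r=0\}$, and that this divisor really is (generically) the same as $\PP\Om(\alpha,k,E\cup\{i\})$ pulled up to the cover. This requires knowing that the residue morphism $\res_i: \PP\oOm(\alpha,k,E)\to\CC$ (as a section of $\cO(1)$) is, near a generic boundary point, a submersion onto a transverse coordinate — equivalently that the residue is not identically constrained to vanish faster. I would extract this from the explicit plumbing/smoothing coordinates for the incidence variety compactification: in those coordinates the residue of the limiting twisted differential extends to a non-degenerate linear-order term, which is the content of the local normal form in \cite{BCGGM2} (and in \cite{Sau} for $k=1$). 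Once transversality of $r$ is established, the conclusion $\res_i = r^k \Rightarrow \operatorname{ord}=k$ is immediate, and the $\mathbb{U}_k$-quotient causes no trouble since the multiplicity of a vanishing is preserved under the étale-in-codimension-one cover $\widehat C \to C$ (more precisely, the covering is unramified over a generic point of the boundary divisor in the moduli direction). A secondary, more bookkeeping-type issue is checking that passing to the projectivization and choosing the trivialization of $\cO(1)$ does not introduce or absorb a power of $t$; this is handled by noting that $\res_i$ is a section of $\cO(1)$ precisely because it scales linearly under the $\CC^*$-action rescaling $\eta$, so the trivialization is the tautological one and contributes no extra vanishing.
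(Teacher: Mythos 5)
Your proposal follows essentially the same route as the paper's proof: pass to the canonical cover, write $\res_i=r^k$ where $r$ is the residue of the abelian differential $v$ at a preimage of $x_i$, and use that the abelian residue morphism is a submersion (hence $r$ vanishes to order exactly one, the $k=1$ case being Corollary~3.8 of \cite{Sau}) to conclude that $\res_i$ vanishes to order $k$; the $\mathbb{U}_k$-quotient is harmless exactly as you say. One correction to your framing: $\PP\Om(\alpha,k,E\cup\{i\})$ lies inside the \emph{open} stratum of smooth curves --- it is the residue-vanishing locus, not a boundary divisor of the incidence variety compactification --- so your step (3), phrased in terms of ``the node being formed'' and plumbing coordinates from \cite{BCGGM2}, invokes machinery that is not relevant here; the transversality you need is simply that of the residue functional on the space of $1$-differentials on the smooth canonical cover, which is the submersion statement the paper cites.
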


\begin{proof} 
If $k=1$, then the residue morphism is a submersion, thus the vanishing multiplicity of $\res_{i}$ along $\PP\Om^{R\cup\{i\}}(a,k)$ is $1$ (see Corollary~3.8 of~\cite{Sau}). For higher values of $k$, we use the canonical cover morphism to embed $\Omega(a,k)\hookrightarrow \Omega(\widehat{a},1) \big/ \mathbb{U}_k$. 
Then the residue at $x_i$ is the $k$-th power of the residue at any of its marked preimages of the canonical cover. The residue morphism is a submersion along the image of $\Omega(a,k)$ in $\Omega(\widehat{a},1) \big/ \mathbb{U}_k$. Therefore,  the residue morphism at $x_i$ vanishes with multiplicity $k$. 
\end{proof}

\subsection{$k$-decorated graphs}\label{ssec:dec} 

Here, we define several families of graphs to describe the boundary of moduli spaces of $k$-differentials.  For the reader's convenience, we summarize their interplay in the following diagram
\begin{equation}
\xymatrix@R=1.6em@C=0.6em{
\Dec \ar[d] &\supset & \Bic &\supset & {\begin{matrix} \dStar, \\ 2-\Bic \end{matrix}}  \ar[lld] & & \\
\Tw \ar[d]&\supset &\tStar \ar[d] &&&& \\
\Stab &\supset & \Star&&&&
}
\end{equation}
(the arrows are maps defined by forgetting part of the data defining a class of graphs). 

\begin{definition} A {\em stable graph} is the datum of $$\Gamma=(V,H,g:H\to \NN,
\iota:H\to H,\phi:H\to V,H^\iota\simeq \{1,\ldots, n\}), \text{ where: }$$
\begin{itemize}
\item the function $\iota$ is an involution of $H$;
\item  the cycles of length $2$ for $\iota$ are called {\em edges} while the fixed points are called {\em legs};
\item we fix an identification $\phi$ of the set of legs with $\{1,\ldots,n\}$;
\item an element of $V$ is called a {\em vertex}, and  its   {\em valency} $n(v)$ is the cardinal of $\phi^{-1}\{v\}$;
\item for all vertices $v$ we have $2g(v)-2+n(v)>0$;
\item the genus of the graph is defined as $h^1(\Gamma)+\sum_{v\in V} g(v)$, where $h^1(\Gamma)=|E|-|V|+1;$ 
\item the graph is connected.
\end{itemize}
A stable graph is a {\em star graph} if it has a distinguished (central) vertex such that all edges are between the central vertex and the other (outer) vertices.
We denote by $\Stab_{g,n}$ and $\Star_{g,n}$  the sets of stable graphs/star graphs of genus $g$ with $n$ legs.
\end{definition}
We recall that $\oM_{g,n}$ admits a canonical stratification indexed by stable graphs: the stratum associated with a stable graph $\Gamma$ is the locus of curves with dual graph $\Gamma$. The closure of this stratum is the image of the morphism
\begin{equation}
 \zeta_{\Gamma}\colon \oM_\Gamma \left(\coloneqq \prod_{v\in V(\Gamma)}\oM_{g(v),n(v)} \right) \to \oM_{g,n},   
\end{equation}
which is a finite and of degree $\big|{\rm Aut}(\Gamma)\big|$.

\begin{definition}
A {\em twist} on a stable graph $\Gamma$ is a function $b:H\to \RR$ satisfying:
\begin{itemize}
\item for all $v\in V$, we have $\sum_{h\in \phi^{-1}(v)} b(h) = 2g(v)-2+n(v);$
\item   if $(h,h')$ is an edge of $\Gamma$, then  $b(h)+b(h')=0$;
\item if $(h_1,h_1')$ and $(h_2,h_2')$ are edges between the same vertices $v$ and $v'$, then $b(h_1)\geq 0$ if and only if $b(h_2)\geq 0$; in which case we denote $v\geq v'$;
\item the relation $\geq $ defines a partial order on the set of vertices. 
\end{itemize}
The pair $(\Gamma,b)$ is called a {\em twisted graph}.  If $e=(h,h')$ is an edge of $\Gamma$,  then its twist $b(e)$ is the absolute value of $b(h)$ or $b(h')$.  The  {\em multiplicity} of a twisted graph is
\begin{equation}
m(\Gamma,b)=\prod_{e \in {\rm Edges}} b(e). 
\end{equation}
A twisted graph is a {\em  twisted star graph} if the underlying graph is a star graph and  the central vertex is smaller than each outer vertex for the order defined by the twist.   We denote by $\Tw$ and $\tStar$ the sets of twisted graphs and twisted star graphs.\footnote{The reader may check that definition of star graphs and twists on star graphs is equivalent to the definition given in Section~\ref{ssec:FR}}
\end{definition}

\begin{definition}
A {\em decorated graph} is the datum of 
$$
\overline{\Gamma}=(\Gamma,b,\ell, V(\Gamma)=V^{\rm ab}\sqcup V^{\nab}), \text{ where:}
$$

\begin{itemize}
    \item $(\Gamma,b)$ is a twisted graph;
    \item $\ell:V(\Gamma)\to \{0,-1,\ldots,-d\}$ is a level function, i.e.  surjective function such that for all vertices $v$ and $v'$, $(v\leq v')$ implies that $\ell(v)\leq \ell(v')$; 
    \item  all twists at half-edges adjacent to vertices in $V^{\rm ab}$ are integral.
\end{itemize} 
The integer $d$ is called the {\em depth} of the graph. We denote by {\rm Dec} the set of decorated graphs. 
\end{definition}

\begin{definition}
A {\em bi-colored graph} is a decorated graph of depth $1$ satisfying:  all edges are between a  vertex of level $0$  and a vertex of level $-1$.  We denote by $\Bic$ the set of bi-colored graphs.   Two special classes of bi-colored graphs will be considered here:
\begin{enumerate}
\item $2-\Bic$: the set of graphs with $2$ vertices such that the vertex of level 0 is in $V^{\rm ab}$. This class of bi-colored graphs is used in Section~\ref{sec:DR}.
\item $\dStar$: the set of graphs with a single (central) vertex of level $-1$ in $V^{\rm nab}$,  each vertex in $V^{\rm ab}$ has a single edge to the central vertex.  This class  is used in Section~\ref{sec:FR}.
\end{enumerate}
\end{definition}

\begin{definition} Let $a$ be a vector in $ \Delta_{g,n}$ and $k\in \ZZ_{>0}$.   We say that a twist function $b$ is: 
\begin{itemize}
\item {\em compatible with $a$} if $b(i)=a_i$ for all $i \in \{1,\ldots, n\}$;
\item a {\em $k$-twist} if $kb$ has integral values.  
\end{itemize}
If ${\rm X}$ is one of the sets of twisted or decorated graphs defined here,   then we denote by ${\rm X}(a)$  and  ${\rm X}(a,k)$  the subsets of twisted/$k$-twisted graphs compatible with $a$.
\end{definition}

\subsection{Strata associated to bi-colored graphs}\label{ssec:strata}

We consider a moduli space $\PP\Om^R(a,k)$ of $k$-differentials with residue conditions  as in Section~\ref{ssec:res}.  The incidence variety compactification of this moduli space admits a stratification indexed by decorated graphs of $\Dec(a,k)$. The stratum associated with a graph $\overline{\Gamma}$ is of co-dimension at least $1+d+h,$ where $h$ is the number of horizontal edges (edges with a zero twist) and $d$ is the depth of $\overline{\Gamma}$ (see~\cite[Section 6]{BCGGM2}).  Therefore, a stratum is of co-dimension at least 2 unless $\overline{\Gamma}$ is either:
\begin{itemize}
 \item a decorated graph of depth 0 with 1 horizontal edge,
 \item or a bi-colored graph.
\end{itemize}

We are only interested in relations between cohomology classes of strata of co-dimension 1 in $\PP\Om^R(a,k)$, so  we  will only recall the description of the strata associated with bi-colored graphs (we will see that graphs of depth 0  do not contribute to these relations).  Let $\overline{\Gamma}$ be a bi-colored graph in $\Bic(a,k)$.  For $i=0$ or $1$,  we set
$$
{\Om}_{\oGamma}^R(k)_i=\Bigg(\prod_{
\begin{smallmatrix} v\in \ell^{-1}(i) \\ v\in V^{\rm nab} \end{smallmatrix}} \Om^{R(v)}(a(v),k)^{\rm nab}  \Bigg)\times \Bigg( \prod_{
\begin{smallmatrix} v\in \ell^{-1}(i) \\ v\in V^{\rm ab} \end{smallmatrix}} {\Om}^{R(v)}(a(v),k)^\ab\Bigg),
$$
where for all $v\in V(\Gamma)$:
\begin{itemize}
\item $a(v)$ is the vector of twists at the half-edges adjacent to $v$;
\item $R(v)$ is the subset of $i\in R$ of indices adjacent to $v$.
\item The space $ \Om^{R}(a,k)^{\rm ab}$ is  the sub-space of $ \Om^{R}(a,k)$ of differentials obtained as powers of meromorphic $1$-forms and $\Om^{R}(a,k)^{\rm nab}$ is the complement of $\Om^{R}(a,k)^{\rm ab}$.
\end{itemize} 
At the level $-1$,  we define a space $\widetilde{\Om}_{\oGamma}^R(k)_{-1}$ as the sub-stack of ${\Om}_{\oGamma}^R(k)_{-1}$ of $k$-differentials satisfying the global resiude condition of~\cite[Definition~1.4]{BCGGM2}.  We recall the description of the global residue under two different conditions that will be relevant to us.

\sskip

$(\star)$ The vector $a$ is positive, $\overline{\Gamma}$ is in $\dStar(a,k)$ and the edges between vertices of $V^{\rm nab}$ are not integral.  In this situation we have
$\widetilde{\Om}_{\oGamma}^R(k)_{-1}={\Om}^{R_{-1}}(a(v_{-1}),k),$
where $v_{-1}$ is the unique vertex of level $-1$  and $R_{-1}$ is the set of half-edges leading to a vertex of $V^{\rm ab}$.   

\sskip

$(\star\star)$  The graph $\overline{\Gamma}$ is in $2-\Bic(a,k)$ and there is a leg $i\in \{1,\ldots,n\}\setminus R$ such that $i$ is incident to the vertex of level 0.  Under this assumption $\widetilde{\Om}_{\oGamma}^R(k)_{-1}={\Om}_{\oGamma}^R(k)_{-1}$ (the global residue condition is trivial in this case).  

\sskip

In all cases we set 
\begin{eqnarray}
{\Om}_{\oGamma}^R(k) \coloneqq {\Om}_{\oGamma}^R(k)_0 \times \PP \widetilde{\Om}_{\oGamma}^R(k)_{-1}.
\end{eqnarray}
There is a canonical morphism $\zeta_{\oGamma}:\PP{\Om}_{\oGamma}^R(k)\to \PP{\oOm}^R(a,k)$ defined by mapping a point of ${\Om}_{\oGamma}^R(k)$ to  a differential on a nodal curve that vanishes on components associated with vertices of level $-1$.  This morphism is of degree 0 if the fibers have  positive dimensions,  and $\big|{\rm Aut}(\oGamma)\big|$ otherwise.   We denote  by ${\rm Irr}_\oGamma^R(k)$  the set of irreducible components of $\PP{\Om}_{\oGamma}^R(k)$ such that  $\zeta_{\oGamma}$ is finite along $D$.

\subsection{Relations in the cohomology of $\PP\oOm^R(a,k)$} \label{sec:relations} 
Let $i \in \{1,\ldots,n\}$, and let ${\rm X}$ be one the sets: $\Bic$, $2-\Bic$ or $\dStar$.  We denote by ${\rm X}_i(a,k)$  the set of graphs such that the label $i$ is incident to a vertex of level $-1$.  If $i\notin R$, and $a_i$ is a negative integer, then we denote by ${\rm X}_i^R(a,k)$ the set of graphs such that either:
\begin{itemize}
    \item the  leg $i$ is incident to a vertex of level $-1$,
    \item or the leg $i$  is incident to a vertex $v$ of level 0 in $V^{\rm ab}$, and all other legs incident to $v$ are either in $R$ or have positive twist.
\end{itemize} 
With this notation, the set ${\rm Bic}_i^R(a,k)$ is the set of bi-colored graphs such that the $i$-th residue vanishes identically along the associated strata. 

\begin{theorem}\label{th:relpsi}
Let $i \in \{1,\ldots,n\}$.  For all $\oGamma\in {\rm Bic}_i(a,k)$,  and all  $D \in {\rm Irr}(\oGamma)$ there exists an integer $m_D$ such that:
\begin{eqnarray}\label{for:relclass1}
\xi + k  a_i \,  \psi_i\, = \sum_{
\begin{smallmatrix}
\oGamma \in {\rm Bic}_i(a,k)\\
D \in {\rm Irr}_\oGamma^R(k)
\end{smallmatrix}
} \frac{m_D}{\big|{\rm Aut}(\oGamma)\big|} \zeta_{\oGamma*}[D],
\end{eqnarray}
and if $i\notin R$, and $a_i$ is a negative integer,   then:
\begin{eqnarray}\label{for:relclass2}
 \xi\, = \, k\,  [\PP\oOm^{R\cup \{i\}}(a,k)] + \sum_{
\begin{smallmatrix}
\oGamma \in {\rm Bic}^R_i(a,k)\\
D \in {\rm Irr}_\oGamma^R(k)
\end{smallmatrix}
} \frac{m_D}{\big|{\rm Aut}(\oGamma)\big|}\cdot \zeta_{\oGamma*}[D].
\end{eqnarray}
If either condition $(\star)$ or $(\star\star)$ of Section~\ref{ssec:strata} is satisfied then  $m_D=k^{|E(\Gamma)|}m(\oGamma)$.
\end{theorem}

\begin{proof} To establish relation~\eqref{for:relclass1},  we consider the line bundle $\mathcal{O}(1)\otimes \mathcal{L}_i^{m_i}\to \PP\oOm^E(a,k)$. This line bundle has a section defined by 
$$s_i:\eta\mapsto \text{$m_i$th order of $\eta$ at $x_i$}.$$
This section does not vanish on $\PP\Om^{R}(a,k)$,  nor on strata associated with decorated graphs of depth $0$ or  bi-colored graphs in ${\rm Bic}(a,k)\setminus {\rm Bic}_i(a,k)$.  Therefore, up to co-dimension $2$ loci,  the vanishing locus of $s_i$ is the union of the images of irreducible component $D$ of ${\rm Irr}_\oGamma^R(k)$ for  $\oGamma$ in ${\rm Bic}_i(a,k)$.  Thus,  there exist integers $m_D$ such that  the relation~\eqref{for:relclass1} is satisfied.

\sskip

If $a_i$ is a negative integer and $i$ is not in $R$,  then we  consider the line bundle $\cO(1)$ and its section given by the $i$-th residue morphism ${\rm res}_i$.  This section vanishes along $\PP\oOm^{R\cup \{i\}}(a,k)$ with multiplicity $k$ (Lemma~\ref{lem:vanres}).  Besides,  the function ${\rm res}_i$ does not vanish along strata associated with decorated graphs of depth $0$ or bi-colored graphs that are not in ${\rm Bic}^R_i(a,k)$.  Therefore,  there exist integers $\widetilde{m}_D$ satisfying:
\begin{eqnarray*}
 \xi\, = \, k\,  [\PP\oOm^{R\cup \{i\}}(a,k)] + \sum_{
\begin{smallmatrix}
\oGamma \in {\rm Bic}_i(a,k)\\
D \in {\rm Irr}_\oGamma^R(k)
\end{smallmatrix}
} \frac{\widetilde{m}_D}{\big|{\rm Aut}(\oGamma)\big|}\cdot \zeta_{\oGamma*}[D].
\end{eqnarray*}
We have to prove that $\widetilde{m}_D=m_D$ if $D$ is in ${\rm Bic}_i(a,k)$.  Let $\Delta$ be an open disk of $\CC$ parametrized by $\epsilon$.  Let  $\Delta\to \PP\oOm^R(a,k) \setminus \PP\oOm^{R\cup \{i\}}(a,k)$  be a morphism such that image of $\epsilon=0$ is a  point of the image of  $D$,  while the other points are mapped to $\PP\Om^R(a,k)$.    Up to a choice of a smaller disk, there exists a lift of $\epsilon$ to $\cO(-1)^*$.  Then, there exists an integer $\ell$  and holomorphic functions $f$ and $\widetilde{f}$ that do not vanish  on $\Delta$ such that $s_i=\epsilon^{\ell} f$ and ${\rm res}_{i}= \epsilon^{\ell} \widetilde{f}$  (see the ``necessary'' part of Theorem~1.5  of~\cite{BCGGM2}).  Thus $s_i$ and ${\rm res}_{i}$ vanish with the same multiplicity $\ell$ along $\epsilon=0$.  This implies that the vanishing multiplicity of $s_i$ and ${\rm res}_{i}$ along any branch of the divisor $D$ are equal and $\widetilde{m}_D=m_D$.  

\sskip

\noindent{\textit{Computation of $m_D$ for $k$-star graphs.}} We set
\begin{eqnarray*}
{\rm lcm}(\oGamma,k)&\coloneqq&{\rm lcm}(kb(e))_{e\in E(\Gamma)}, \text{ and}\\
G(\oGamma,k)&\coloneqq&\left(\prod_{e\in E(\Gamma)} \mathbb{U}_{kb(e)} \right)\bigg/  \mathbb{U}_{{\rm lcm}(\oGamma,k)}.
\end{eqnarray*}
The fact that $m_D=k^{|E(\Gamma)|}m(\oGamma)$ in the situations $(\star)$ and $(\star\star)$ is a direct consequence of the following  Lemma. 
\end{proof}
\begin{lemma}\label{lem:technical} If condition $(\star)$ or $(\star\star)$ is satisfied and $y$ is a point of $\PP\Omega_\oGamma^R(k)$,  then there exists an open neighborhood $U$ of $y$ in $\PP\Omega_\oGamma^R(k)$,  a disk $\Delta$ in $\CC$ containing $0$  and a morphism  $\iota\colon U\times \Delta\times G(\oGamma,t)\to \PP\oOm^R(a,k)$ satisfying:
\begin{itemize}
\item For all $\gamma \in G(\oGamma,k)$, the morphism $\iota$ induces an isomorphism $U\times \{0\} \times g$ with $U$. 
\item The image of $U\times (\Delta\setminus\{0\}) \times G(\oGamma,k)$ lies in $\PP\Om^R(a,k,E)$.
\item The section $s_i$ vanishes with multiplicity ${\rm lcm}(\oGamma,k)$ along $U\times \{0\} \times G(\oGamma,k)$.
\item The morphism $\iota$ is a degree $1$ parametrization of a neighborhood of $y$ in $\PP\oOm_\oGamma^R(a,k)$. 
\end{itemize}
\end{lemma}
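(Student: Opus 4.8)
The strategy is to reduce everything to the local plumbing description of the incidence variety compactification established in~\cite{BCGGM2}, and then to carry out an explicit root-extraction computation that accounts for the group $G(\oGamma,k)$ and the integer ${\rm lcm}(\oGamma,k)$. First I would recall that near a point of the stratum associated to a bi-colored graph, Theorem~1.5 of~\cite{BCGGM2} provides a model smoothing family: one chooses, for each edge $e=(h,h')$ of $\oGamma$, a plumbing parameter $t_e$, and the resulting $k$-differential on the smoothed curve is prescribed (up to higher order) by the differentials on the two levels together with a scaling of the level $-1$ component by a power of the $t_e$. The key numerical input is that at an edge $e$ the orders of vanishing of the two local differentials are governed by $k\beta(h)$ and $k\beta(h')$, so the gluing condition forces a compatibility that is only solvable after extracting a $k\beta(e)$-th root of the plumbing parameter, where $\beta(e)=\sqrt{-\beta(h)\beta(h')}$; here is where the factor $k\beta(e)$ (hence $k^{|E(\Gamma)|}m(\Gamma,\beta)$ after taking the product) enters. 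Since for a $k$-star graph all edges run between the central (level $-1$) vertex and the outer (level $0$) vertices, the level $-1$ component is rescaled simultaneously by all the edge parameters, so the correct single smoothing parameter $\epsilon$ is a common root: $t_e=\epsilon^{{\rm lcm}(\oGamma,k)/(k\beta(e))}$ up to a unit, and the ambiguity in choosing the individual roots $\zeta_e$ of unity, modulo the global scaling that does not change the curve, is exactly the group $G(\oGamma,k)=\big(\prod_e \mathbb{U}_{k\beta(e)}\big)/\mathbb{U}_{{\rm lcm}(\oGamma,k)}$.

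Concretely, the steps are: (1) fix $y\in \PP\Omega(\oGamma,k,E)$, choose a chart $V$ of the stratum and, using~\cite{BCGGM2}, a versal plumbing family over $V\times \{(t_e)_e\}$ landing in $\PP\oOm(\alpha,k,E)$; (2) change coordinates by $t_e=\epsilon^{{\rm lcm}(\oGamma,k)/(k\beta(e))}\cdot\zeta_e$ with $\zeta_e\in\mathbb{U}_{k\beta(e)}$, observe that the resulting family over $V\times\Delta\times\prod_e\mathbb{U}_{k\beta(e)}$ factors through $V\times\Delta\times G(\oGamma,k)$ because the diagonal $\mathbb{U}_{{\rm lcm}(\oGamma,k)}$ acts by rescaling $\epsilon$ and simultaneously by the residual $\CC^*$-scaling of the level $-1$ differential, producing isomorphic pointed flat surfaces; (3) check that for $\epsilon\neq 0$ the curve is smooth (all nodes plumbed) and $\eta$ has the prescribed orders and vanishing residues along $E$, so the image lies in $\PP\Om(\alpha,k,E)$, while at $\epsilon=0$ one recovers the boundary stratum, giving the isomorphism of $V\times\{0\}\times\{\gamma\}$ with $V$; (4) compute the order of vanishing of the section $s_i$: since $i$ is adjacent to the central level $-1$ vertex, $s_i$ reads off the coefficient of the prescribed leading term at $x_i$, which in the plumbed family scales as $\epsilon^{{\rm lcm}(\oGamma,k)}$ — this is the content of the third bullet; (5) verify the degree-$1$ claim by a dimension count and by the fact that the $G(\oGamma,k)$-quotient has exactly trimmed the redundancy, so $\iota$ is generically injective onto a neighborhood of $y$ (matching the degree formula ${\rm deg}(D/\zeta_{\oGamma,k}(D))=|{\rm Aut}(\oGamma)|$ already recorded above, with the combinatorial automorphisms of the star graph accounted for separately).

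The main obstacle I expect is Step (2) together with the precise bookkeeping in Step (4): one must show that the naive product of roots $\prod_e\mathbb{U}_{k\beta(e)}$ over-counts by exactly $\mathbb{U}_{{\rm lcm}(\oGamma,k)}$ and not more, i.e. that no further identifications among the smoothing families occur, and that the section $s_i$ truly vanishes to order ${\rm lcm}(\oGamma,k)$ rather than to some divisor or multiple of it. This requires unwinding the global residue condition for $k$-star graphs (as described in Section~\ref{sssec:grc}) to see which edges actually constrain the leading behaviour at $x_i$, and using the canonical cover trick of Lemma~\ref{lem:vanres} at each edge to pass between the $k$-differential picture and the abelian picture where orders of vanishing of plumbing parameters are transparent. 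Once these local normal forms are pinned down, the four bulleted assertions follow formally, and feeding $m_D={\rm lcm}(\oGamma,k)\cdot[\text{multiplicity from }G]=k^{|E(\Gamma)|}m(\Gamma,\beta)$ back into Step~4 of the proof of Theorem~\ref{th:relpsi} closes the argument.
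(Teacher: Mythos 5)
Your proposal follows essentially the same route as the paper's proof: an explicit plumbing of the nodes with parameters $z_{e,0}z_{e,-1}=\zeta_e\,\epsilon^{{\rm lcm}(\oGamma,k)/(k\beta(e))}$, identification of the redundancy among the choices of roots $\zeta_e$ with the diagonal $\mathbb{U}_{{\rm lcm}(\oGamma,k)}$ (hence the group $G(\oGamma,k)$), and the observation that the level $-1$ differential is rescaled by $\epsilon^{{\rm lcm}(\oGamma,k)}$, which gives the vanishing order of $s_i$. The paper writes the local normal forms of $\eta$ at the nodes by hand rather than citing the versal family of~\cite{BCGGM2}, and settles the neighborhood-covering claim by the retraction argument of Lemma~5.6 of~\cite{Sau}, but these are presentational differences, not a different argument.
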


\begin{proof}[Proof of Lemma~\ref{lem:technical}] The proof is similar to the proof of Lemma~5.6 of~\cite{Sau}. 
We can decompose the point $y$ into
$$y=y_0 \times y_{-1}=(C_0,[\eta_0]) \times (C_{-1},[\eta_{-1}]) \in  \PP\Omega_{\oGamma}^R(k)_{0}\times \PP\Omega_{\oGamma}^R(k)_{-1},$$
where $\eta_i$ is a $k$-differential up to a scalar (we omit the notation of the markings). For $i=0$ and $ -1$, we chose a neighborhood $U_i$ of $y_i$ in  $\PP\Omega_{\oGamma}^R(k)_{i}$ together with a trivialization $\sigma_i$ of $\mathcal{O}(-1)\to \PP\Omega_{\oGamma}^R(k)_{i}$. We assume that $U=U_{0}\times U_{-1}$  has coordinate $u=(u_0,u_{-1})$ and that $y=(0,0)$. We rephrase the choice of a  trivialization of the line bundle as: we chose a family of $k$-differentials $(C_i(u_i),\eta_i(u_i))$  for $u_i\in U_i$ such that $(C_i(0),[\eta_i(0)])=y_i$ for $i=0$ or $-1$.

\sskip

Let $e=(h,h')$ be an edge of $\Gamma$ with twist $kb(e)$. Let $\sigma_0\colon U\to C_{0}$ and $\sigma_{-1}\colon U\to C_{-1}$ be the sections corresponding to the branch of the node associated with $e$. For $i=0,-1$, there exists a neighborhood $V_i$ of $\sigma_i$ in $C_i$ of the form $U_i\times \Delta_{e,i}$ where $\Delta_{e,i}$ is a disk of the plane parametrized by $z_{e,i}$,  and such that 
$$\eta_{i}(u_i,z_{e,i})= \left(z_{e,i}^{\pm kb(e)}+{\rm res}_{e, i}(u_i) \right) \left(\frac{dz_{e,i}}{z_{e,i}}\right)^k,
$$
where the sign is positive for $i=0$ and negative for $i=-1$,  and the residue function ${\rm res}_{e, i}$  is zero unless $i=-1$, $b(e)$ is integral, and we are in the $(\star\star)$ situation.  The coordinates $z_{e,i}$ are only defined up to a $kb(e)$-th root of unity.  We fix such a choice for all edges $e$ and $i=0,-1$.

\sskip

\noindent{\textit{Constructing a smoothing of $\eta$ under condition $(\star)$.}} For all $e\in E(\Gamma)$, we fix $\zeta_e$ a $kb(e)$-th root of unity, i.e. an element $\zeta$ in  $\prod_{e\in E(\Gamma)} \mathbb{U}_{kb(e)}$.  With this data, we construct a family of curves  $C_\zeta\to \Delta\times U$ as follows.  Around a node corresponding to $e\in E(\Gamma)$, we define $C_{\zeta}(\epsilon,u)$ as the solution of
$$z_{e,0} \cdot z_{e,-1}=\zeta_e\cdot\epsilon^{{\rm lcm}(\oGamma,k)/(kb(e))}
$$
in $\Delta_{e,0}\times \Delta_{e,-1}$. Outside a neighborhood of the nodes, we define  $C_{\zeta}(u,\epsilon)\simeq C_{0}(u)$ or $C_{-1}(u)$. On this family of curves, we can define a $k$-differential by 
$$\eta_\zeta=z_{e,0}^{kb(e)}\left(\frac{dz_{e,0}}{z_{e,0}}\right)^k =  \frac{{\epsilon}^{{\rm lcm}(\oGamma,k)}}{z_{e,-1}^{kb(e)}}\cdot \left(\frac{dz_{e,-1}}{z_{e,-1}}\right)^k 
$$
in the chart $z_{e,0}z_{e,-1}=\zeta_e\cdot\epsilon^{{\rm lcm}(\oGamma,k)}$. Then this differential is extended by $\eta_0$ or $\epsilon^{-\rm lcm}\eta_{-1}$ outside a neighborhood of the nodes.   

\sskip

\noindent{\textit{Constructing a smoothing of $\eta$ under condition $(\star\star)$.}}  We have to modify the previous construction as  the residue function is non-trivial.  We fix a $k$-th root of the $k$-differential at the upper vertex.  We assume that the residues of $\eta_{-1}$ at the node are non-trivial (generic situation) and we fix locally a $k$-th root $r_e^{1/k}$ of each of these residue functions.  With this data, we can find a family of $1$-differentials $(\widetilde{C_0}, {v_0})$ parametrized by  $U_0\times \Delta$ such $\widetilde{C}_0(u_0,0)=C_0(u_0)$, and $v_0^k(u_0,0)=\eta_0(u_0)$, while the residue of $v_0$ at the node corresponding to an edge $e$ is given by $\epsilon^{{\rm lcm}(\oGamma,k)} r_e^{1/k}$(see~\cite[Proposition~3.10]{Sau}).  On this family of differentials, we can find a coordinate $z_{e,0}$ such that $v_0^k=(z_{e,0}^{kb(e)}+r)\left(\frac{dz_{e,0}}{z_{e,0}}\right)$ on an annulus around 0, and the smoothing construction still works. 

\smallskip

\noindent{\textit{Neighborhood of the boundary.}} Two deformations $(C_{\zeta},\eta_\zeta)$ and $(C_{\zeta'},\eta_{\zeta'})$ are isomorphic if and only if $\zeta=\rho \zeta'$ for some ${\rm lcm}(\oGamma,k)$-th root of unity $\rho$. Therefore the morphism:
\begin{eqnarray*}
\iota\colon \PP(U) \times \Delta \times G(\oGamma,k)&\to& \PP\oOm^R(a,k)\\
(u, \epsilon, \gamma) &\mapsto & (C_{\gamma}(u,\epsilon),\eta_{\gamma}(u,\epsilon))
\end{eqnarray*}
is of degree $1$ on its image.  To check that this morphism parametrizes a neighborhood of $y$, we can show as in the case of abelian differentials that there exists a retraction  $\eta_V:\widetilde{V}\to V$, where $\widetilde{V}$ is 
a neighborhood of $y$ in $\PP\oOm^R(a,k)$. Besides, all points $y'$ in $V$ lies in the image of $\{\eta(y')\}\times \Delta\times G(\oGamma,k)$  under $\iota$ (see~\cite[Lemma~5.6]{Sau}, or~\cite[Proposition~1.2]{CosMoeZac}).
\end{proof}
 
\section{Growth of integrals on moduli spaces of pluri-canonical divisors}\label{sec:DR}

Let $(a,k)$ be a rational pair of $\Delta_{g,n}$. We denote by $\M(a,k)^{\rm nab}\subset \M(a,k)\setminus \M(a,1)$, the subspace of  pluri-canonical divisors with no residues. We denote by $\oM(a,k)^{\rm nab}$ the closure of $\M(a,k)^{\rm nab}$ in $\oM_{g,n}$, and we set
\begin{eqnarray}
    \A(a,k)\coloneqq \int_{\oM(a,k)^{\rm nab}} \frac{1}{1+ka_1\psi_1} \\
    \A^2(a,k)\coloneqq \int_{\oM(a,k)^{\rm nab}} \frac{\psi_2}{1+ka_1\psi_1}
\end{eqnarray}
We define the polynomial $\A^2\colon \Delta_{g,n}\to \RR$ by the formula
\begin{eqnarray}\label{def:A2}
    a_2\A^2(a)  &=& -\A(a) + \sum_{i=3}^n (a_2+a_i-1)\A(a_1, a_2+a_i-1,\ldots,\widehat{a_i},\ldots) \\
    \nonumber
    && + \int_{b=0}^{a_2-1} \frac{b(a_2-1-b)}{2} \A(a_1,\widehat{a_2},\ldots,b,a_2-1-b)\, db,
\end{eqnarray}
where the polynomial $\A$ is defined by~\eqref{def:A}. Actually, the formula~\eqref{def:A2} only defines $\A^2$ as a rational function, but the polynomiality of $\A^2$ will be proved in the section. The following proposition is established at the end of the Section. It is the only result used in the rest of the text. 
\begin{proposition}\label{pr:growthcanonical} The rational numbers $\A(a,k)$ and $\A^2(a,k)$ can be explicitly computed. 
If $K$ is a compact set of $\Delta_{g,n}$, then there exists a positive constant $C>0$ such that 
\begin{equation}
    \left|\A(a,k)\right|<C\, k^{4g-3+n}, \text{ and }  \left|\A^2(a,k)\right|<C\, k^{4g-4+n},
\end{equation}
for all pairs in $K$. Furthermore, there exists a positive constant $C'$ such that 
\begin{equation}\label{for:growth}
    \left|k^{-4g+3-n}\A(a,k) - \A(a)\right| < \frac{C'}{k} \text{ and }\left|k^{-4g+4-n}\A^2(a,k) - \A^2(a)\right| < \frac{C'}{k}
\end{equation}
for all pairs in $K$ such that $a$ has no integral negative coordinates.
\end{proposition}
The proof is based on the results of~\cite{BHPSS} and~\cite{CosSauSch} on double ramifications cycles, and Theorem~\ref{th:relpsi}.

\subsection{Double ramification cycles}

A {\em simple star graph} is a twisted star graph such that the twists at edges incident to outer vertices are positive integers. We denote by ${\rm sStar}$ the set of simple star graphs. Let $(a,k)$ be rational vector of $\Delta_{g,n}\setminus \ZZ_{>0}^n$. The {\em double ramification cycle} is the class in $H^{2g}(\oM_{g,n},\QQ)$ defined as 
\begin{equation}\label{for:dr}
{\rm DR}(a,k)\, =\, \sum_{(\Gamma,b) \in {\rm sStar}(a,k)} \frac{k^{|E(\Gamma)|-|{\rm Out}(\Gamma)|} m(\Gamma,b)}{\big| {\rm Aut}(\Gamma,b)\big|} \zeta_{\Gamma*}\left( [\oM(a(v_0),k)] \times \prod_{v \in {\rm Out}(\Gamma)} [\oM(a(v),1)] \right)  
\end{equation}
where $v_0$ is the central vertex and ${\rm Out}(\Gamma)$ is the set of outer vertices. By \cite[Theorem~9]{BHPSS} the class ${\rm DR}(a,k)$ is equal to a cohomology class $P(a,k)$ called {\em Pixton class}. This second class can be explicitly computed (see~\cite{DelSchZel} for the implementation with Sagemath), and depends polynomially on the parameters~\cite{PixZag,Spe}.  Moreover, the cohomology classes of spaces of canonical divisors may be explicitly computed in terms of double ramification cycles~\cite{FarPan,Sch}. We summarize these results in the following proposition. 
\begin{proposition} The class ${\rm DR}(a,k)$ is a polynomial in the variables $ka_1,\ldots,ka_n$ of degree $2g$ on its domain of definition. The coefficients of this polynomial are explicitly computable. Furthermore, the class $[\oM(a,k)]$ is explicitly computable for all choices of $a$ and $k$ (even if $a$ sits in $\ZZ_{>0}^n$).  
\end{proposition}
We extend the definition of ${\rm DR}(a,k)$ to vectors $a\in \ZZ_{>0}^n$  by polynomiality.  The following lemma gives an alternative expression of ${\rm DR}(a,k)$ that is valid for all vectors. To state it, we denote by ${\rm sStar}_1\subset {\rm sStar}$ the set of simple star graphs such that $x_1$ is incident to the central vertex. 
\begin{lemma} For all pairs $(a,k)$ of $\Delta_{g,n}$ the  following relation 
\begin{eqnarray}
\nonumber {\rm DR}(a,k) &+& a_1\psi_1 [\oM(a,1)] \\ \label{for:dr1}
&=& \!\!\!\!\! \!\! \!\! \sum_{(\Gamma,b) \in {\rm sStar}_1(a,k)} \!\! \!\! \frac{k^{|E(\Gamma)|-|{\rm Out}(\Gamma)|} m(\Gamma,b)}{\big| {\rm Aut}(\Gamma,b)\big|} \zeta_{\Gamma*}\left( [\oM(a(v_0),k)] \times \!\!\!\!\! \prod_{v \in {\rm Out}(\Gamma)} \!\!\! [\oM(a(v),1)] \right)
\end{eqnarray}
is valid up to classes in cohomological degrees different from $2g$.
\end{lemma}

\begin{proof}
If $a$ is a vector of $\Delta_{g,n}$, then we denote $a'$ the vector $(0,a_1+1,\ldots,a_n)$ of $\Delta_{g,n+1}$. This vector is not in $\ZZ_{>0}^{n+1}$ so the class ${\rm DR}(a',k)$ is defined by formula~\eqref{for:dr}. We denote by $\delta\in H^*(\oM_{g,n+1},\QQ)$ the class of the  divisor of nodal curves  with a genus 0 component carrying $x_0$ and $x_1$. We set $${\rm DR}'(a,k)\ = \ \pi_*\left(\delta \cdot {\rm DR}(a',k)\right),$$ where $\pi\colon \oM_{g,n+1}\to \oM_{g,n}$ is the forgetful morphism of the last marking.   The class ${\rm DR}'(a,k)$ is equal to the RHS of~\eqref{for:dr1}.  Indeed, each graph of ${\rm sStar}(a',k)$ contributes trivially unless the first leg is incident to the  central vertex. If ${\rm sStar}(a',k)$ contains a twisted graph $(\Gamma,b)$ such that $\zeta_{\Gamma *}[\oM_\Gamma]=\delta$, then the associated contribution is equal to $-a_1\psi_1[\oM(a,1)]$. The rest of the graphs contribute as the sum in the RHS.  

\sskip

Finally, the equality ${\rm DR}'(a,k)={\rm DR}(a,k)$  holds if $a_1$ is not an integer. By polynomiality of  the classes ${\rm DR}$ on ${\rm DR}'$ this equality is valid for all pairs $(a,k)$ of $\Delta_{g,n}$.  
\end{proof}

The polynomial function ${\rm DR}\colon\Delta_{g,n}\to H^{2g}(\oM_{g,n},\RR)$ is defined as the unique function  that satisfies: there exists a positive constant $C>0$ such that $\left\lVert {\rm DR}(a,k) - k^{2g} {\rm DR}(a) \right\rVert < C\, k^{2g-1}$ pairs of $\Delta_{g,n}$. This function is a polynomial of degree $2g$. 

\begin{lemma}\label{lem:growth} There exists a positive constant $C$ such that 
$$\left\lVert [\oM(a,k)] - k^{2g} {\rm DR}(a) \right\rVert < C\, k^{2g-1}$$ for all pairs $(a,k)$ of $ \Delta_{g,n}$.
\end{lemma}

\begin{proof}
The proof is done by induction on $g$ and $n$. The base of induction is trivial as $[\oM_{0,n}(a,k)]=1$, so we assume that $g\geq 1$. First, there are  finitely many vectors of $\Delta_{g,n}^+$ with integral coordinates, so the degree $2g-2$ part of $[\oM(a,k)]$ is bounded. Therefore we may replace in the statement the class $[\oM(a,k)]$ by its degree $2g$ part. If we restrict ourselves to the cohomological degree $2g$ then the following relation holds
\begin{eqnarray*}
[\oM(a,k)]={\rm DR}(a,k) + a_1 \psi_1 [\oM(a,1)]  - \!\!\!\! \sum_{(\Gamma,b) \in {\rm sStar}^*_1(a,k)} c_{\Gamma,b},
\end{eqnarray*}
where ${\rm sStar}^*_1(a,k)$ is the set of non-trivial star graphs and $c_{\Gamma,b}$ is the summand defined by~\eqref{for:dr1}. The class $a_1 \psi_1 [\oM(a,1)]$ is bounded, so we study the sum in the RHS.  There are finitely many star graphs and finitely many possible values of twists at half-edges incident to outer vertices. Therefore, the number of terms in the sum in~\eqref{for:dr1} is bounded, and there exists a positive constant $C_1$ such that
$$
\lVert c_{\Gamma,b} \rVert < C_1\, k^{2g(v_0)+(|E(\Gamma)|-|{\rm Out}(\Gamma)|)}< C_1\, k^{2g-1}
$$
by induction hypothesis applied to the class at the central vertex of $\Gamma$. Combining these inequalities we get that $\left\lVert [\oM(a,k)]- {\rm DR}(a,k) \right\rVert<C\, k^{2g-1}$ for some constant $C>0$ and the lemma follows by definition of ${\rm DR}(a)$. 
\end{proof}

\subsection{Large $k$ asymptotic in presence of residue conditions}


If $R$ is a subset of $\{1,\ldots,n\}$, then we denote $\M^R(a,k)\subset \M(a,k)$ the locus of canonical divisors with no residue at $x_i$  for $i\in R$, and by $\oM^R(a,k)$ its closure in $\oM_{g,n}$. The first part of Proposition~\ref{pr:growthcanonical} is a direct consequence of the following Lemma.
\begin{lemma}\label{lem:growth2} The class $[\oM^R(a,k)]$ can be explicitly computed for all pairs of $\Delta_{g,n}$. Moreover, for all compact sets  $K\subset \Delta_{g,n}$, there exists a positive constant $C$ such that $$\left\lVert [\oM^R(a,k)]  \right\lVert < C\, k^{2g}.$$
for all pairs $(a,k)$ of $K$.
\end{lemma}

\begin{proof} We prove this lemma by induction on the size of $R$.   The base  $R=\emptyset$ is provided by Lemma~\ref{lem:growth}.  Therefore, we fix a set $R\subset \{2,\ldots,n\}$, and we will prove that the conclusion of the lemma is valid for the set $R\cup \{1\}$. 

\sskip

We fix a compact set $K\subset \Delta_{g,n}$, and a negative integral value for $a_1$ (there are finitely many in $K$).  We apply theorem~\ref{th:relpsi} to obtain the following relation in $H^2(\PP\oOm^R(a,k),\QQ)$
\begin{eqnarray}\label{for:resind}
 [\PP\oOm^{R\cup \{1\}} (a,k)] = a_1\,  \psi_{1} -
 \sum_{
\begin{smallmatrix}
\oGamma \in {\rm Bic}_{1}^R(a,k)\setminus \Bic_{1}(a,k)\\
D \in {\rm Irr}_\oGamma^R(k)
\end{smallmatrix}
} \frac{m_D}{k\, \big|{\rm Aut}(\oGamma)\big| } \zeta_{\oGamma*}[D].
\end{eqnarray}
This relation is the difference between~\eqref{for:relclass1} and~\eqref{for:relclass2} (divided by $k$). We denote by $2-{\rm Bic}(a,k)$ the set $2-{\rm Bic}_{1}^R(a,k)\setminus 2-\Bic_{1}(a,k)$: it is the set of graphs in $2-{\rm Bic}$ such that $1$ is incident to the level 0, and no $i \in \{2,\ldots, n\}\setminus R$ such that $a_i$ is a negative integer is incident to the level 0. The graphs of $2-{\rm Bic}(a,k)$ satisfy the condition $(\star\star)$ of Section~\ref{ssec:strata}, so we can compute the multiplicity of their contribution to~\eqref{for:relclass1} and~\eqref{for:relclass2}. We consider the morphism $p\colon\PP\oOm^R(a,k)\to \oM_{g,n}$ defined by forgetting the differential. We will show that after pushing forward  formula~\eqref{for:resind} along $p$, we obtain:
\begin{eqnarray}\label{for:resind2}
 [\oM^{R\cup \{1\}}(a,k)] &=& a_1\,  \psi_{1}   [\oM^{R}(a,k)] -
 \sum_{\oGamma \in 2-{\rm Bic}(a,k)
} \frac{k^{|E(\Gamma)|-1}m(\oGamma)}{\big|{\rm Aut}(\oGamma)\big| } p_*\zeta_{\oGamma*}[\PP\Om_\oGamma^R(k)].
\end{eqnarray}

To prove this identity, we need to check that a graph $\oGamma$ of ${\rm Bic}_{1}^R(a,k)\setminus \Bic_{1}(a,k)$ defines a cohomology class that is mapped to $0$ under $p_*$ unless $\oGamma$ is in $2-{\rm Bic}(a,k)$. First, the morphism $p$ has fibers of positive dimension along the strata associated with graphs with more than one vertex of level 0. Thus, the class of any component of ${\rm Irr}_\oGamma^R(k)$ is sent to $0$ by $p_*$ unless $\oGamma$ has one vertex of level 0. If this is the case, then we observe that the first leg is incident to the unique vertex of level 0 by definition of ${\rm Bic}_{1}^R(a,k)$, thus the global residue condition is trivial. This implies that ${\rm Irr}_\oGamma^R(k)$ is empty unless $\oGamma$ has exactly one vertex of level $-1$ and~\eqref{for:resind2} holds.

\sskip 

The terms in the sum of the RHS of~\eqref{for:resind2} are determined by classes of strata with fewer residue conditions so the class $[\oM^{R\cup \{1\}}(a,k)]$ can be explicitly computed by induction hypothesis. To control the growth of $[\oM^{R\cup \{1\}}(a,k)]$,  we observe that there are finitely many values of the twists at half-edges incident to the vertex of level 0 for all graphs of $2-{\rm Bic}(a,k)$ and all pairs $(a,k)$ of $K$.  Therefore,  the number of graphs in $2-{\rm Bic}(a,k)$ is bounded for all $a\in K$, and there exists a positive constant $C_1$ such that the norm of the contribution of each graph $\oGamma$ of $2-{\rm Bic}(a,k)$ is bounded by  $C_1\, k^{2g(v_{-1})+|E(\oGamma)|-1}\leq C_1 \, k^{2g}$
by induction hypothesis (applied to the vertex that does not carry the first leg). Altogether, this implies that there exists a positive constant $C$ such that $\left\lVert [\oM^{R\cup\{1\}}(a,k)]  \right\lVert < C\, k^{2g}.$ 
 \end{proof}

\subsection{Computing $\A$ and $\A^2$} 

 To finish the proof of Proposition~\ref{pr:growthcanonical}, we remark that the existence of polynomials $\A$ and $\A^2$ satisfying the inequalities~\eqref{for:growth} is assured by Lemma~\ref{lem:growth}. It remains to check that 
\begin{eqnarray}\label{for:Adr}
{\A}(a)\, &=& \, \int_{{\rm DR}(a)} \frac{1}{1+a_1\psi_1},\, \\\label{for:Adr2}
\text{ and }\,\, {\A}^2(a)\, &=&\, \int_{{\rm DR}(a)} \frac{\psi_2}{1+a_1\psi_1}.
\end{eqnarray}
If $a_1$ and $a_2$ are  strictly rational, then for all graphs non-trivial $(\Gamma,b)\in {\rm sStar}(a,k)$, the legs $1$ and $2$ are incident to the central vertex. Then  the intersection of $\psi_1^{2g-3+n}$ or $\psi_2\psi_1^{2g-4+n}$ with the class defined by  $(\Gamma,b)$ in~\eqref{for:dr} vanishes for dimension reasons. Therefore, the only graph contributing to the intersection of ${\rm DR}(a,k)$ with $\psi_1^{2g-3+n}$ or $\psi_2\psi_1^{2g-4+n}$ is the trivial star graph, and we have
\begin{eqnarray*}
{\A}(a,k)\, = \, \int_{{\rm DR}(a,k)} \frac{1}{1+ka_1\psi_1},\, \text{ and }\,\, {\A}^2(a,k)\, =\, \int_{{\rm DR}(a,k)} \frac{\psi_2}{1+ka_1\psi_1}
\end{eqnarray*}

This observation ensures that the polynomial $\A$ and $\A^2$  are determined by integrals on double ramification cycles.  Then, by ~\cite[Theorem~1.1]{CosSauSch}, we have
$$
\int_{{\rm DR}(a,k)} \frac{1}{1+ka_1\psi_1} \, =\, k^{4g-3+n} [z^{2g}]  (-a_1)^{2g-3+n} {\rm exp}\left(a_1z\frac{\S'(z)}{\S(z)}\right) \prod_{i=2}^n \frac{\S(a_iz)}{\S(z/k)\S(z)^{2g-1+n}},
$$
thus  equality~\eqref{for:Adr} holds. To compute the integral $\A^2(a,k)$ we use the ``splitting formula''  \cite[Proposition 3.1]{CosSauSch}. This formula expresses the product $$k(a_2\psi_2-a_1\psi_1)\, {\rm DR}(a,k)$$ as a sum indexed by bi-colored graphs with two vertices. If we multiply this formula by $(-ka_1\, \psi_1)$, then most terms vanish apart from those for which the vertex carrying the second leg is of genus 0 with 3 half-edges. Therefore, the following relation holds
\begin{eqnarray*}
    a_2k\A^2(a,k)+\A(a,k) &=&  \sum_{0< b< k(a_2-1)} \frac{b(ka_2-k-b) }{2}\A((a_1,\widehat{a_2},\ldots, b/k, a_2-1-b/k),k)\\
    &&+ \sum_{i>2} k(a_2+a_i-1)\A((a_1,a_2+a_i-1,\ldots,\widehat{a_i},\ldots),k).
\end{eqnarray*}
The first sum of this expression is of the form 
$$\frac{k^{4g-4+n}}{2}\int_{0\leq b\leq a_2-1} b(a_2-1-b) \A(a_1,\widehat{a_2},\ldots, b, a_2-1-b)\, db \, + \, O\left(k^{4g-5+n}\right)$$
by Riemann approximation of integrals, thus~\eqref{for:Adr2} is valid.

\section{Flat recursion}\label{sec:FR}

Here we complete the proofs of Theorem~\ref{th:fr} and Lemma~\ref{lem:defvpsi}.

\subsection{Growth of sums on $k$-star graphs}

Let $a$ be a vector in $\Delta_{g,n}^+$ and $\Gamma$ a star graph. We denote by $T_\Gamma(a)$ the set of twists on $\Gamma$ compatible with $a$. This set is the quotient of the open domain $\Delta_\Gamma(a)\subset \RR^{h^1(\Gamma)}$ (defined in Section~\ref{ssec:FR}) by the action of ${\rm Aut}(\Gamma)$. If $k\geq 1$ we denote by $T_\Gamma(a,k)$ the set of $k$-twists on $\Gamma$ compatible with $a$.

\begin{lemma}\label{lem:growthgraph} We assume that $a$ is rational and we fix a continuous function $f:T_\Gamma(a)\to \RR$. Let $f_k:T_\Gamma(a,k)\to \RR$ be a series of  functions,  and $C$ a positive constant such that 
$$|f_k(b)-f(b)|<C/k$$ 
for all $k$ and $b$. If we denote by $\widetilde{f}^{}$ the composition $\Delta_\Gamma(a)\to T_\Gamma(a) \overset{f^{}}{\to} \RR$, then we have
\begin{eqnarray*}
 \frac{1}{\big|{\rm Aut}(\Gamma)\big|} \int_{\Delta_{\Gamma}(a)}  \widetilde{f}(b) \, db = \underset{ka \in \ZZ^n}{\lim_{k\to \infty}} \frac{1}{k^{h_1(\Gamma)}} \sum_{b\in T_\Gamma(a,k)} \frac{f_k(b)}{\big|{\rm Aut}(\Gamma,b)\big|}.
\end{eqnarray*}
\end{lemma}

\begin{proof}
For all $k\geq 1$, we denote by $\Delta_\Gamma(a,k)\subset\ZZ_{>0}^{E(\Gamma)}$ the subset of $\Delta_\Gamma(a)$ of vectors $b$ such that $kb$ is integral. Then $T_\Gamma(a,k)$ is the quotient of $\Delta_\Gamma(a,k)$ by ${\rm Aut}(\Gamma)$ and we can rewrite
$$
\sum_{b\in T_\Gamma(a,k)} \frac{f_k(b)}{\big|{\rm Aut}(\Gamma,b)\big|}=\sum_{b\in \Delta_\Gamma(a,k)} \frac{\widetilde{f_k}(b)}{\big|{\rm Aut}(\Gamma)\big|}
$$
where $\widetilde{f_k}$ is the composition $\Delta_\Gamma(a,k)\to T_\Gamma(a,k)\overset{f}{\to} \RR$. Then, the lemma follows from the convergence of Riemann sums:
$$
\underset{ka\in \ZZ^n}{\lim_{k\to \infty}} \frac{1}{{k}^{h_1(\Gamma)}} \sum_{b\in\Delta_\Gamma(a,k)} \widetilde{f}(b)  = \int_{\Delta_\Gamma(a)}  \widetilde{f}(b).
$$
\end{proof}

\subsection{Recursion relations for fixed $k$} 
 We begin by writing a recursion relation for the intersection numbers $\V(a,k)$ for a fixed value of $k>1$.  

\begin{lemma}\label{lem:indint1} We assume that $a$ is positive. Let $\oGamma$ be a bi-colored graph in ${\rm Bic}_1(a,k)$  and let $D$ be an irreducible component of ${\rm Irr}_\oGamma(k)$ such that 
$\int_{{D}} \psi_1^{j}\xi^{2g-4+n-j} \neq 0$ 
for some non-negative integer $j$, then $\oGamma\in {\rm dStar}_1(a,k)$ and the twists at edges between vertices of $V^{\rm nab}$ are not integral (situation $(\star)$ of Section~\ref{ssec:strata}). 
\end{lemma}

\begin{proof} We assume that $\oGamma$ and $D$ satisfy the hypothesis of the lemma.  We decompose $D$ as $D_0\times D_{-1},$ where $D_0$ is an irreducible component of $\PP\Om_\oGamma(k)_{0}$, and $D_{-1}$ is an irreducible component of $\PP\widetilde{\Om}_\oGamma(k)_{0}$. We have
\begin{eqnarray*} \xi^{2g-4+n-j}\, \psi_1^{j} \, \zeta_{\oGamma *} [D]  &=&  \zeta_{\oGamma *}\left((\xi^{2g-4+n-j}[D_0]) \otimes  (\psi_1^{j}[D_{-1}])\right).
\end{eqnarray*}
Therefore ${\rm dim}(D_{-1})=j$, and ${\rm dim}(D_0)=2g-4+n-j$.
To analyse further this expression, we  identify $D_0$ with $$\PP \left( \prod_{v\in \ell^{-1}(0)} (\cO(-1)|_{D_v} \right),$$ where the spaces  $D_v$ are irreducible component of $\PP\oOm(a(v),k)^{\rm ab}$ or $\PP\oOm(a(v),k)^{\rm nab}$. Then, with this identification, we have
\begin{eqnarray} 
\xi^{2g-4+n-j}\cdot [D_0] &=&
\Big(\!\!\!\! \prod_{
\begin{smallmatrix} v\in \ell^{-1}(i) \\ v\notin V^{\rm ab} \end{smallmatrix}} \xi^{2g(v)-3+n(v)} [D(v)]\Big)\times \Big(\!\!\!\! \prod_{
\begin{smallmatrix} v\in \ell^{-1}(i) \\ v\in V^{\rm ab} \end{smallmatrix}} \xi^{2g(v)-2+n(v)}[D(v)]\Big).
\end{eqnarray}
We have $\xi^{2g}=0$ on $\PP\oOm(a,1)$ by~\cite[Proposition~3.3]{Sau4}, so $\xi^{2g-4+n-j}\psi_1^{j}$ vanishes along $D$ unless each vertex in $V^{\rm ab}$ has exactly one edge. This condition implies that there is only one vertex of level $-1$. Indeed the only residue conditions come from the vertices of $V^{\rm ab}$, and here these relations cannot be used to compare the residues at two different poles of level $-1$, so $D$ is of co-dimension greater than $1$ in $\PP\oOm(a,k)$ if $D$ has more than $1$ vertex of level $-1$.

\sskip

If $k\geq 2$, then $\xi^{2g-3+n}$ vanishes on $\PP\oOm(a,k)$ if $a$ has at least one integral coordinate. So the edges from vertices in $V^{\rm nab}$  to the vertex of level $-1$ have non-integral twists at the edges. Therefore the stable graph $\Gamma$ is a decorated star graph that satisfies the condition $(\star)$.
\end{proof}

If $\oGamma$ is a bi-colored graph of ${\rm dStar}_1(a,k)$ then we set 
\begin{eqnarray*}
 \V_{\oGamma}(k) &\coloneqq&  \int_{\PP\oOm_\oGamma(k)}  \frac{1}{(1-\xi)}\frac{1}{(1+ka_1\psi_1)}. 
\end{eqnarray*}
With this notation we have
\begin{equation}
\label{for:reckint}
 \V_{\oGamma}(k) \, = \, \mathcal{A}(a(v_0),k) \times  \prod_{\begin{smallmatrix}v | \ell(v)=0,\\ v\notin V^{\rm ab} \end{smallmatrix}} \!\!\!\! \V(a(v),k)  \times 
 \prod_{\begin{smallmatrix}v | \ell(v)=0,\\ v\in V^{\rm ab} \end{smallmatrix}} k^{2g(v)-2+n(v)}\V(a(v),1),
 \end{equation}
This integral vanishes unless condition~$(\star)$ is satisfied.  
\begin{lemma}\label{lem:indint2}
For all pairs $(a,k)$ of $\Delta_{g,n}^+$, we have 
\begin{eqnarray}
\label{for:V}
\V(a,k)&=& \A(a,k) + \!\!\!\!\! \sum_{\oGamma\in {\rm dStar}_1(a,k)}  \frac{k^{|E(\Gamma)|}m(\oGamma)}{\big|{\rm Aut}(\oGamma)\big|} \V_\oGamma(k). 
\end{eqnarray}
\end{lemma}

\begin{proof}
We express $\xi^{2g-3+n}$ as
\begin{equation*}
      \sum_{j\geq 0} \xi^{2g-4+n-j} (-ka_1\psi_{1})^j (\xi+ka_{1}\psi_1).
\end{equation*}
Then, we use Theorem~\ref{th:relpsi} to write 
$$
\xi+ka_1\psi_1\, =\, \sum_{\oGamma \in {\rm dStar}^*_1(a,k)}  \frac{k^{|E(\Gamma)|}m(\oGamma)}{\big|{\rm Aut}(\oGamma)\big|} \zeta_{\oGamma*}[\PP\oOm_{\oGamma}(k)] + \delta,
$$ 
where ${\rm dStar}^*_1(a,k)\subset {\rm dStar}_1(a,k)$ is the set of graphs satisfying the  condition $(\star)$, and  $\delta$ is a class supported on the strata associated to other bi-colored graphs.  By Lemma~\ref{lem:indint1} the integral of $\xi^{2g-4+n-j}\psi_1^{j}$  along $\delta$ vanishes. Besides, the integrals of these classes along graphs in ${\rm dStar}^*_1(a,k)$ are determined by formula~\eqref{for:reckint}. Therefore, we have
\begin{eqnarray*}
\V(a,k)&=& \int_{\oM(a,k)} \frac{1}{1+ka_1\, \psi_1} + \!\!\!\!\! \sum_{\oGamma\in {\rm dStar}^*_1(a,k)}  \frac{k^{|E(\Gamma)|}m(\oGamma)}{\big|{\rm Aut}(\oGamma)\big|}\V_\oGamma(k), 
\end{eqnarray*}
Finally, we can replace the summation on ${\rm dStar}^*_1(a,k)$ by a summation on the bigger set ${\rm dStar}_1(a,k)$ because $\V_{\oGamma}$ 
vanish for the graphs that do not satisfy the $(\star)$ condition.
\end{proof}

\subsection{End of proof of Theorem~\ref{th:fr}}
We work by induction on $g$ and $n$. The base of induction is given by $\V(a,k)=1$ if $(g,n)=(0,3)$, and $\V(a,k)=0$ if $n=1$. If we assume that $2g-2+n>1$, then Lemma~\ref{lem:indint2} expresses  $\V(a,k)$ in terms of the rational numbers $\V(a',k)$ for $|a'|<|a|$, and the rational numbers $\A(a',k)$. The first ones are computable by the induction hypothesis, while the latter ones are computable by Proposition~\ref{pr:growthcanonical}. This completes the proof of the first part of Theorem~\ref{th:fr}, and it remains to analyze the large $k$ behavior of $\V(a,k)$. 
 Let $C_1$ be a positive constant such that:
\begin{itemize}
    \item $\V(a',k)<  C_1\, k^{4g'-3+n'}$ for all $a'\in \Delta_{g',n'}^+$ such that $2g'-2+n'<2g-2+n$,
    \item and $\A(a',k)<C_1 \, k^{4g'-3+n'}$ for all vectors $a'\in \Delta_{g',n'}$ with $2g'-2+n'\leq 2g-2+n$, and with coordinates bigger than $-(2g-2+n)$.
\end{itemize} 
The existence of this constant is granted by Proposition~\ref{pr:growthcanonical} and the induction hypothesis. Then for all graphs $\oGamma$ of ${\rm dStar}_1(a,k)$, we have the inequality
\begin{equation*}
\V_\oGamma(k)\, <\, C^{|V(\Gamma)|} \, k^{d_\oGamma}, \, \text{ where }\,   d_\oGamma\, =\,\sum_{v\in V^{\rm nab}}  4g(v)-3+n(v) +\sum_{v\in V^{\rm ab}} 2g(v)-2+n(v)
\end{equation*}
obtained by combining the inequalities at each vertex of $\Gamma$. If $\Gamma$ is a star graph, then there are at most $((2g-2+n)k)^{h^1(\Gamma)}$ different twist functions making $\Gamma$ a twisted star graph, and finitely many choices of partitions $V(\Gamma)=V^{\rm nab}\sqcup V^{\rm ab}$. Besides, for all these choices, the constant $d_{\oGamma}$ is at most $d_\Gamma\coloneqq\sum_{v\in V(\Gamma)} 4g(v)-3+n(v)$. Therefore, there exists a positive constant $C_\Gamma$ such that
$$
\sum_{\oGamma}  \frac{m(\oGamma)k^{|E(\Gamma)|}}{\big|{\rm Aut}(\oGamma)\big|} \V_\oGamma(k) < C_\Gamma \, k^{d_\Gamma+|E(\Gamma)|+h^1(\Gamma)}<C_\Gamma\, k^{4g-3+n},
$$
for all pairs $(a,k)$ in $\Delta_{g,n}^+$, and where the sum is over all decorated graphs of ${\rm dStar}(a,k)$ with underlying stable graph $\Gamma$. As ${\rm Star}_{g,n}$ is finite, we conclude that there exists a positive constant $C$ such that $|\V(a,k)|<C \, k^{4g-3+n}$.

\sskip

The last step of the proof is the computation of the leading term of $\V(a,k)$. If $\oGamma$ has at least one vertex in $V^{\rm ab}$ then $d_{\oGamma}<d_{\Gamma}$, so
$$
\V(a,k) \ =\ \underset{\text{s. t. $V^{\rm ab}=\emptyset$}}{\sum_{\oGamma \in \dStar_1(a,k)}}  \frac{m(\oGamma)k^{|E(\Gamma)|}}{\big|{\rm Aut}(\oGamma)\big|} \V_\oGamma(k) + O\left(k^{4g-4+n}\right).
$$
A graph in $\dStar_1(a,k)$ with  $V^{\rm ab}=\emptyset$ is uniquely determined by its underlying twisted graph. Therefore we can write
$$
\V(a,k) \ =\ \sum_{\Gamma \in {\rm Star}_{g,n}^1} \sum_{b \in T_\Gamma(a,k)}   \frac{m(\Gamma,b)k^{|E(\Gamma)|}}{\big|{\rm Aut}(\Gamma,b)\big|} \V_{\Gamma,b}(k) + O\left(k^{4g-4+n}\right)
$$
where $\V_{\Gamma,b}(k)=\V_\oGamma(k)$ for the decorated graph determined by $(\Gamma,b)$ (and the correction term can be chosen uniformly on $\Delta_{g,n}^+$). Therefore, the theorem holds by Lemma~\ref{lem:growthgraph} and the induction hypothesis.

\subsection{Two computations of $\V^2$} 
Here, we impose that $a_2$ is integral. We express the function $\V^2(a,k)$ in two different ways, which will result in two different expressions of their leading term in the large $k$ asymptotics. To produce the first expression of $\V^2$, we use Lemma~\ref{lem:indint1} to write
\begin{eqnarray*}
 \xi+ka_2\psi_2= \sum_{\oGamma \in {\rm dStar}^*_2(a,k)}  \frac{k^{|E(\Gamma)|}m(\oGamma)}{\big|{\rm Aut}(\oGamma)\big|} \zeta_{\oGamma*}[\PP\oOm_{\oGamma}(k)] + \delta,
\end{eqnarray*}
where $\xi^{2g-4+n}\delta=0$. Besides, $\xi^{2g-3+n}=0$ because $a_2$ is integral, so we get
\begin{eqnarray*}
  a_2\V^2(a,k)= \sum_{\oGamma \in {\rm dStar}_2(a,k)}  \frac{k^{|E(\Gamma)|-1}m(\oGamma)}{\big|{\rm Aut}(\oGamma)\big|} \xi^{2g-4+n}\, \zeta_{\oGamma*}[\PP\oOm_{\oGamma}(k)].
\end{eqnarray*}
A decorated graph contributes trivially to this sum unless it sits in $\widetilde{{\rm dStar}}_2(a,k)$, the set of decorated star graphs for which the dimension of the space defining the level $-1$ is 0. 
Therefore we may re-write  
\begin{eqnarray*}
 a_2\V^2(a,k)= \sum_{\oGamma \in \widetilde{{\rm dStar}}_2(a,k)}  \frac{k^{|E(\Gamma)|-1}m(\oGamma)}{\big|{\rm Aut}(\oGamma)\big|} \V_\oGamma(k)
\end{eqnarray*}
(here $\V_{\oGamma}$ is defined by~\eqref{for:reckint} if we exchange the roles of $1$ and $2$). 
By the same analysis as in the previous section, the joint contribution of the graphs with $V^{\rm ab}\neq \empty$ is equal to a $O(k^{4g-5+n})$. Therefore, we can write
\begin{eqnarray*}
a_2\V^2(a,k) &=& \underset{\text{s.t. $(g(v_0),n(v_0))=(0,3)$}}{\sum_{\Gamma \in {\rm Star}_{g,n}^2}} \sum_{b\in T_\Gamma(k)} \V_{\Gamma,b} + O\left(k^{4g-5+n}\right)\\
&=& \sum_{i\neq 2} (a_2+a_i-1)\, \V((a_1,a_2+a_i-1,\ldots,\widehat{a_i},\ldots),k)\\
&& + \sum_{b_1+b_2=ka_2-k} \frac{b_1b_2}{2} \, \V(a_1,\widehat{a_2},\ldots,b_1/k,b2/k), k)\\
&& + \underset{I_1\sqcup I_2=\{1,3,4,\ldots,n\}}{ \sum_{g_1+g_2=g}} \frac{b_1b_2}{2}\, \V\left(b_1,\{a_i\}_{i\in I_1})\, \V(b_2,\{a_i\}_{i\in I_2}\right) +  O\left(k^{4g-5+n}\right),
\end{eqnarray*}
where $b_j=2g_j-2-\sum_{i\in I_i}$ in the last sum. Together with Lemma~\ref{lem:growth}, this  last identity finishes the proof of Lemma~\ref{lem:defvpsi}. The following Lemma gives an alternative expression that will be used in the next section. 

\begin{lemma}\label{lem:V2alterantive}
Let $\Gamma$ be a star graph in ${\rm Star}_{g,n}^1$. We  set $V^2_\Gamma(a)=\int_{b\in \Delta_{\Gamma}(a)}\widetilde{\V}_\Gamma^2(b)\, db$, where
\begin{equation}\label{for:V2gamma}
    \widetilde{\V}_\Gamma^2(b)= \left\{\begin{array}{cl} \A^2(a(v_0))\, \prod_{v \neq v_0} \V(a(v)) & \text{if $2$ is incident to the central vertex $v_0$,}\\
    \A(a(v_0))\,  \V^2(a(v)) \,\prod_{v'\neq v_0,v} \V(a(v'))  & \text{if 2 is incident to an outer vertex $v$.}
    \end{array} \right.
\end{equation}
Then the following expression holds
\begin{equation}\label{for:V2}
    \V^2(a)=\sum_{\Gamma \in {\rm Star}_{g,n}^1} \frac{\V^2_\Gamma}{\big|{\rm Aut}(\Gamma)\big|}.
\end{equation}
\end{lemma}

\begin{proof} The proof of this lemma is similar to the proof of Theorem~\ref{th:fr}. We write
\begin{equation*}
    \psi_2\xi^{2g-4+n}= \psi_2(\xi+ka_1\, \psi_1) \sum_{j>0} \xi^{2g-5+n-j}(-ka_1\, \psi_1)^j.
\end{equation*}
Then, we use Theorem~\ref{th:relpsi} to write 
\begin{eqnarray*}
    ka_2\, \V^2(a,k)&=& \sum_{\oGamma \in \dStar_1(a,k)}  \frac{k^{|E(\Gamma)|}m(\oGamma)}{\big|{\rm Aut}(\oGamma)\big|} \V^2_\oGamma(k), \text{ where }\\
    \V^2_\oGamma(k)&=& \int_{\PP\oOm_\oGamma(k)}  \frac{1}{(1-\xi)}\frac{\psi_2}{(1+ka_1\psi_1)}
\end{eqnarray*}
Here we have used the fact that the classes associated with graphs that do not satisfy $(\star)$ intersect trivially with the classes $\psi_2\xi^{2g-5+n-j}\psi_1^{j}$. This is proved via the same argument as in Lemma~\ref{lem:indint1}. 

\sskip

Then the integral $\V^2_\oGamma(k)$ is expressed via  a formula similar to~\eqref{for:reckint}  where we replace either $\A$ by $\A^2$, or $\V$ by $\V^2$ at the vertex that carries the second leg. The Lemma follows by the same analysis for large values of $k$ as in the proof of Theorem~\ref{th:fr}: the joint contribution of the graphs such that $V^{\rm ab}\neq \emptyset$ is equal to  a $O\left(k^{4g-5+n}\right)$, while the rest of this expression is given by 
$$
 \V^2(a,k)=k^{4g-4+n}\sum_{\Gamma \in {\rm Star}_{g,n}^1} \frac{\V^2_\Gamma(a)}{\big|{\rm Aut}(\Gamma)\big|} + O\left(k^{4g-5+n}\right)
$$
by Lemma~\ref{lem:growth}. 
\end{proof}

\section{Topological recursion at integral angles}\label{sec:TR}

 Here, we identify the set $\Delta_{g,n}$ with the set of vectors $(a_2,\ldots,a_n)\in \RR^{n-1}$, while $a_1$ is seen as the function $(a_i)_{i\geq 2} \mapsto 2g-2+n-\sum_{i=2}^n a_i$. If $n\geq 2$, then we will show that
\begin{equation}\label{for:psider2}
\left(\frac{\partial \V}{\partial a_2}-\V^2\right)\bigg|_{a_2=N} = 0
\end{equation}
for all positive integers $N$ (Theorem~\ref{th:psider}).  Together with Lemma~\ref{lem:defvpsi} established in the previous Section, this completes the proof of Theorem~\ref{th:fr}. \sskip

\subsection{Comparison of expressions on star graphs} We prove the identity~\eqref{for:psider2} by induction on $g$ and $n$. The base case  is trivial because $\Delta_{0,3}^+$ contains no vectors with integral coordinates, so we assume that $2g-2+n>1$. We will compare the expressions of $\V$ and $\V^2$ as sums on star graphs~\eqref{for:FR} and~\eqref{for:V2}. 

\sskip 

First, let $\Gamma$ be a non-trivial star graph of ${\rm Star}_{g,n}^{1}$ such that the second leg  lies on an outer vertex $v$.  Then, we write
$$
\V_\Gamma = \int_{b\in T_\Gamma(a)} \V(a(v)) Q(a,b)\, db, \,\, \text{ and }\,\,  \V_\Gamma^2 = \int_{b\in T_\Gamma(a)} \V^2(a(v)) Q(a,b)\, db.
$$
The polynomial $Q$ is the product of the factors associated with the vertices different from $v$, and the product of the twists in~\eqref{for:FRgamma} and~\eqref{for:V2gamma}. With this notation we have
\begin{eqnarray*}
    \frac{\partial \V_\Gamma(a(v))}{\partial a_2} \bigg|_{a_2=N} 
    &=& \int_{b\in T_\Gamma(a)} \left(\frac{\partial \V(a(v))}{\partial a_2} Q(a,b) + \V(a(v)) \frac{\partial Q(a,b)}{\partial a_2}\right) \bigg|_{a_2=N}   \, db\\
    &=& \int_{b\in T_\Gamma(a)} \left(\frac{\partial \V(a(v))}{\partial a_2} Q(a,b) \right)\bigg|_{a_2=N}   \,db\\
    &=& \int_{b\in T_\Gamma(a)} \Big(\V^2(a(v)) Q(a,b) \Big)\bigg|_{a_2=N}   \, db =\V_\Gamma^2(a)\bigg|_{a_2=N}.
\end{eqnarray*}
 We used the vanishing of $\V(a(v))$ when $a_2=N$ (from the first line to the second) and the induction hypothesis (from the second line to the third). Thus, it remains to prove that 
\begin{equation}
\sum_{\Gamma \in {\Star}_{g,n}^{1,2}}  \frac{1}{\big| {\rm Aut}(\Gamma)\big|}\left(\frac{\partial \V_\Gamma}{\partial a_2}-\V_\Gamma^2\right)\bigg|_{a_2=N} = 0,
\end{equation}
where ${\Star}_{g,n}^{1,2}$ is the set of star graphs such that the central vertex carries the legs $1$ and $2$. To do so, we use the following Lemma that will be proved at the end of the section.

\begin{lemma}\label{lem:Ahyperbolic} If we denote $c_j=[z^{2j}] \frac{z/2}{{\rm tanh}(z/2)}$ for $j\geq 1$, then
\begin{eqnarray*}
    \left( \frac{\partial \A}{\partial a_2}-\A^2\right)\bigg|_{a_2=N}&=& \sum_{0<j<N/2} c_{j,N} \, \A(a_1,N-2j,\ldots), \text{ where}\\
    c_{j,N}&=&c_j \left(\prod_{i=1}^{2j} N-i\right).
\end{eqnarray*}
\end{lemma}
If $\Gamma$ is a star graph of ${\Star}_{g,n}^{1,2}$, then we write
$$
\V_\Gamma = \int_{b\in T_\Gamma(a)} \A(a(v)) P(a,b)\, db, \,\, \text{ and }\,\,  \V_\Gamma^2 = \int_{b\in T_\Gamma(a)} \A^2(a(v)) P(a,b)\, db,
$$
where $P$ is defined as the product of the factors associated with outer vertices and the product of the twists defined in~\eqref{for:FRgamma} and~\eqref{for:V2gamma}. Then the following identity holds
$$
\left( \frac{\partial \V_\Gamma}{\partial a_2}-\V_\Gamma^2\right)\bigg|_{a_2=N} = \sum_{0<j<N/2} c_{j,N} \V_\Gamma(a_1,N-2j,\ldots)
$$
by Lemma~\ref{lem:Ahyperbolic}. We use this identity to finish the proof:
\begin{eqnarray*}
\sum_{\Gamma \in {\Star}_{g,n}^{1,2}}  \frac{1}{\big| {\rm Aut}(\Gamma)\big|}\left( \frac{\partial \V_\Gamma}{\partial a_2}-\V_\Gamma^2\right)\bigg|_{a_2=N} &=& \sum_{0<j<N/2}  \sum_{\Gamma \in {\Star}_{g,n}^{1,2}}  \frac{c_{j,N}}{\big| {\rm Aut}(\Gamma)\big|} \V_\Gamma(a_1,N-2j,\ldots)\\
&=& \sum_{0<j<N/2} \sum_{\Gamma \in {\Star}_{g,n}^{1}}  \frac{c_{j,N}}{\big| {\rm Aut}(\Gamma)\big|} \V_\Gamma(a_1,N-2j,\ldots)\\
&=& \sum_{0<j<N/2} c_{j,N} \V(a_1,N-2j,\ldots)=0.
\end{eqnarray*} 
We used the vanishing of $\V_{\Gamma}$ when  the second leg lies on an outer vertex (from the first line to the second), and the flat recursion formula at vectors $(a_1,N-2j,\ldots)$ (from the second line to the third).

\subsection{Hyperbolic identities} The proof of Lemma~\ref{lem:Ahyperbolic} relies on a series of identities satisfied by the formal series $\S$ and $\C=\frac{z\S'}{\S}+1=\frac{{\rm tanh}(z/2)}{z/2}$ in $\CC[[z]]$.
If $x$ is a variable in $\CC$,  then we use the simplified notation $\S(x)=\S(xz)$, and $\C(x)=\C(xz).$ 
For all $(a_1,\ldots,a_n)\in \CC^n$, the following formulas hold 
\begin{eqnarray}\label{for:hyperbolicproduct}
    \frac{|a|\frac{z}{2} \cS(|a|)}{\S(a_1)\ldots \S(a_n)}\!&=&\!\sum_{j\geq 0} \underset{|I|=2j+1}{\sum_{I\subset \{1,\ldots,n\}, \text{ s.t.}}} \left(\prod_{i\in I} a_i\frac{z}{2}\right)\times \left(\prod_{i\notin I} \C(a_i)\right), \text{ and}\\
    \label{for:hyperbolicproduct2}
    \frac{\C(|a|)\S(|a|)}{\S(a_1)\ldots \S(a_n)}\! &=& \!\sum_{j\geq 0} \underset{|I|=2j}{\sum_{I\subset \{1,\ldots,n\}, \text{ s.t.}}} \left(\prod_{i\in I} a_i\frac{z}{2}\right)\times \left(\prod_{i\notin I} \C(a_i)\right).
\end{eqnarray}
(these formulas are derived from product formulas for ${\rm sinh}$ and ${\rm cosh}$). 

\begin{lemma}\label{lem:hyperbolic}
For all positive integers $N$, we have
\begin{equation}
N(N-1)\left(\S(N)-\S(N-1)\C\S\right)= N\sum_{j\geq 1} c_{j,N} \,  z^{2j}\S^{2j}\S(N-2j).
\end{equation}
\end{lemma}

\begin{proof} We use~\eqref{for:hyperbolicproduct} to rewrite the LHS of this identity as
\begin{eqnarray*}
 &&\!\!\!\!\!\! (N-1) \S^{N}\sum_{j\geq 0} \binom{N}{2j+1}(z/2)^{2j}\C^{2N-(2j+1)} - N \S^{N}\C \sum_{j\geq 0} \binom{N-1}{2j+1}(z/2)^{2j}\C^{2N-1-(2j+1)}\\
&& =\,\S^N\sum_{j\geq 1}  ((N-1)-(N-1-2j))  \binom{N}{2j+1} (z/2)^{2j} \C^{N-(2j+1)}\\
&&= \, \S^N\sum_{j\geq 1}  2j\,  \binom{N}{2j+1} (z/2)^{2j}\C^{N-(2j+1)}
\end{eqnarray*}
On the other hand, the RHS can be rewritten as 
\begin{eqnarray*}
    &&\S^N\sum_{j\geq 1} c_jz^{2j} \sum_{k\geq 0} \left(\prod_{i=0}^{2j-1} N-i \right) \binom{N-2j}{2k+1} (z/2)^{2k}\C^{N-2j-2k-1}\\
    &&=  \S^N \sum_{j\geq 1}\sum_{k\geq 0} c_jz^{2j} \binom{N}{2j+2k+1} \frac{(2j+2k+1)!}{(2k+1)!}(z/2)^{2k} \C^{N-2j-2k-1}\\
    &&=\S^N \sum_{j\geq 1}\left(\sum_{\ell=1}^j c_\ell z^{2\ell}(z/2)^{-2\ell} \frac{(2j+1)!}{(2j-2\ell+1)!} \right) \binom{N}{2j+1} z^{2j}\C^{N-(2j+1)}
\end{eqnarray*}
Therefore the lemma follows from the identity
$$
\frac{2j}{(2j+1)!}=[z^{2j}] \S(2z)\left(\C(2z)-1\right)= \sum_{\ell=1}^j \frac{c_\ell2^{2\ell}}{(2j-2\ell+1)!} .
$$
\end{proof}

\subsection{End of the proof of Lemma~\ref{lem:Ahyperbolic}} 
We set $$
X\coloneqq (-a_1)^{2g-4+n} {\rm exp}\left(a_1\frac{z\S'}{\S}\right) \frac{\prod_{i=2}^n \S(a_i)}{\S^{2g-1+n}}.
$$
With this notation we have
\begin{eqnarray}
\A &=& [z^{2g}]-a_1X \\
    \frac{\partial \A}{\partial a_2}&=& [z^{2g}] \left(2g-3+n+a_1z\frac{\cS'}{\cS} - a_1z\frac{\cS'(a_2)}{\cS(a2)} \right) X\\
    a_2\A^2 + \A &=& [z^{2g-2}]\, \frac{X\S}{\S(a_2)}\, \int_{b=0}^{a_2-1} b(a_2-1-b)\S(b)\S(a_2-1-b) \\&& +\,  [z^{2g}]\,  \frac{X\S}{\S(a_2)}\, \sum_{i>2} (a_2+a_i-1) \frac{\S(a_2+a_i-1)\S}{\S(a_i)}. \nonumber
\end{eqnarray}
To simplify the last formula, we compute the integral 
\begin{eqnarray*}
2 {\int_{b=0}^y} \sinh(bz/2)\sinh((y-b)z/2)\, db &=& \int_{b=0}^y \left(\cosh(yz/2) + \cosh((b-y/2)z)\right) \, db\\
&=& y\cosh(yz/2) + \frac{2}{z} \sinh(yz/2) = y^2z \S'(y).   
\end{eqnarray*}
Then, we get 
\begin{eqnarray}\label{for:A2expression}
    a_2\A^2\!\!\!\! &+& \!\!\!\! \A \, =\,  [z^{2g}]\left((a_2-1)^2z\frac{\S'(a_2-1)}{\S(a_2)} + \sum_{i>2} (a_2+a_i-1) \frac{\S(a_2+a_i-1)}{\S(a_2)\S(a_i)} \right) X\S.
\end{eqnarray}
We simplify further this expression by setting
\begin{eqnarray*}
F\coloneqq X \S \frac{(a_2-1)\S(a_2-1)}{\S(a_2)}.
\end{eqnarray*}
The log-derivative of $F$ is equal to 
\begin{eqnarray*}\nonumber
\frac{1}{F}\frac{\partial F}{\partial z} &=&  a_1\left(\frac{\S'}{\S}+z\frac{\S''\S-\S'^2}{\S^2}\right) - (2g-2+n)\frac{\S'}{\S}  + \frac{(a_2-1)\S'(a_2-1)}{\S(a_2-1)}+ \sum_{i>2} \frac{a_i\S'(a_i)}{\S(a_i)}.
\end{eqnarray*}
We have $[z^{2g}] \left(z\frac{\partial F}{\partial z}-2gF\right)=0$. This identity can be rewritten as 
\begin{eqnarray}\label{for:Frelation}
    0&=& [z^{2g}] \left( a_1\left(\C-\frac{1}{\S^2}\right) - (2g-2+n)\C  + \frac{(a_2-1)z\S'(a_2-1)}{\S(a_2-1)}+ \sum_{i>2} \C(a_i) \right) F\\\nonumber
    &=& [z^{2g}] \left( \frac{-a_1}{\S^2} + \frac{(a_2-1)z\S'(a_2-1)}{\S(a_2-1)}-a_2\C + \sum_{i>2} (\C(a_i)-a_i\C) \right) F,
\end{eqnarray}
here we have used the fact that $|a|=2g-2+n,$ and $\S''=\frac{1}{4}\S-\frac{2}{z}\S'$
 to simplify the expression of the derivative of $F$. Besides, for all $i>2$, we have
\begin{eqnarray*}
    &&\frac{(a_2+a_i-1)\S(a_2+a_i-1)}{\S(a_2)\S(a_i)\S}- (\C(a_i)-a_i\C)\frac{(a_2-1)\S(a_2-1)}{\S(a_2)\S}\\
    &=& a_2\C(a_i)\C+a_i\C(a_2)\C-\C(a_i)\C(a_2)-\frac{a_2a_iz^2}{4}-(\C(a_i)-a_i\C)(a_2\C-\C(a_2))=\frac{a_2a_i}{\S^2}.
\end{eqnarray*}
This formula is obtained by applying the relation $\C^2=\frac{1}{\S^2}+\frac{z^2}{4}$, and  the product formula~\eqref{for:hyperbolicproduct}. We use these expressions to simplify the difference between~\eqref{for:A2expression} and~\eqref{for:Frelation} 
 \begin{eqnarray*}
     a_2\A^2+ \A &=& [z^{2g}]\left(\left(a_2\S^2\C+{a_1}\right)\frac{(a_2-1)\S(a_2-1)}{\S(a_2)\S} +\sum_{i>2} a_2a_i\right) X
 \end{eqnarray*}
Therefore, using the expressions of $\A$ and its derivative, we get the expression:
\begin{eqnarray*}
    a_2\left(\frac{\partial \A}{\partial a_2}-\A^2\right)&=& [z^{2g}]\Bigg(a_2(a_1+a_2-1)-a_1+a_1a_2\frac{z\S'}{S}-a_1a_2\frac{z\S'(a_2)}{\S(a_2)}\\
    &&\,\,\,\,\,\,\,\,\,\,\,\,\,\,\,\,\,\,\,\, -
    \left(a_2\S^2\C+a_1\right)\left(a_2\C-\C(a_2)\right)  \Bigg) X\\
    &=& [z^{2g}]a_2(a_2-1)\left(1-\frac{\S(a_2-1)}{\S(a)} \C \right)X.
\end{eqnarray*}
If $a_2=N$, then we use Lemma~\ref{lem:hyperbolic} to simplify this last formula:
\begin{eqnarray*}
    \left(\frac{\partial \A}{\partial a_2}-\A^2\right)\bigg|_{a_2=N}&=& [z^{2g}]\left(\sum_{j\geq 1} c_{j,N}\,\frac{\S(N-2j)}{\S(N)}z^{2j}S^{2j}\right)X\\
    &=& \sum_{j\geq 1} c_{j,N}\,  \A(a_1,a_N-2j,\ldots)
\end{eqnarray*}
which completes the proof of Lemma~\ref{lem:Ahyperbolic}.

\newcommand{\etalchar}[1]{$^{#1}$}

\end{document}